\newcommand{\half}{\frac{1}{2}}
\newcommand{\ii}{\mathcal{I}}
\newcommand{\bby}{\mbox{{\boldmath $\mathcal{Y}$}}}
\newcommand{\bbw}{\mbox{{\boldmath $\mathcal{W}$}}}
\newcommand{\by}{\mathbf{Y}}
\newcommand{\bw}{\mathbf{W}}
\newcommand{\rr}{\mathbb{R}}
\newcommand{\eps}{\varepsilon}
\renewenvironment{proof}[1][\proofname]{\par \normalfont \trivlist
\item[\hskip\labelsep\itshape #1]\ignorespaces
}{%
\hspace*{\fill}$\Box$ \endtrivlist }
\renewcommand{\proofname}{{\bf Proof}}
\def\newrmtheorem#1{\@ifnextchar[{\@rmothm{#1}}{\@rmnthm{#1}}}
\def\@rmnthm#1#2{%
\@ifnextchar[{\@rmxnthm{#1}{#2}}{\@rmynthm{#1}{#2}}}
\def\@rmxnthm#1#2[#3]{\expandafter\@ifdefinable\csname #1\endcsname
{\@definecounter{#1}\@addtoreset{#1}{#3}%
\expandafter\xdef\csname the#1\endcsname{\expandafter\noexpand
  \csname the#3\endcsname \@rmthmcountersep \@rmthmcounter{#1}}%
\global\@namedef{#1}{\@rmthm{#1}{#2}}\global\@namedef{end#1}{\@endrmtheorem}}}
\def\@rmynthm#1#2{\expandafter\@ifdefinable\csname #1\endcsname
{\@definecounter{#1}%
\expandafter\xdef\csname the#1\endcsname{\@rmthmcounter{#1}}%
\global\@namedef{#1}{\@rmthm{#1}{#2}}\global\@namedef{end#1}{\@endrmtheorem}}}
\def\@rmothm#1[#2]#3{\expandafter\@ifdefinable\csname #1\endcsname
  {\global\@namedef{the#1}{\@nameuse{the#2}}%
\global\@namedef{#1}{\@rmthm{#2}{#3}}%
\global\@namedef{end#1}{\@endrmtheorem}}}
\def\@rmthm#1#2{\refstepcounter
    {#1}\@ifnextchar[{\@rmythm{#1}{#2}}{\@rmxthm{#1}{#2}}}
\def\@rmxthm#1#2{\@beginrmtheorem{#2}{\csname the#1\endcsname}\ignorespaces}
\def\@rmythm#1#2[#3]{\@opargbeginrmtheorem{#2}{\csname
       the#1\endcsname}{#3}\ignorespaces}
\def\@rmthmcounter#1{.\noexpand\arabic{#1}}
\def\@rmthmcountersep{}
\def\@beginrmtheorem#1#2{\rm \trivlist
      \item[\hskip \labelsep{\bf #1\ #2\thmrmcounterend}]}
\def\@opargbeginrmtheorem#1#2#3{\rm \trivlist
      \item[\hskip \labelsep{\bf #1\ #2\ (#3)\thmrmcounterend}]}
\def\@endrmtheorem{\endtrivlist}
\def\thmrmcounterend{\hskip 0em\relax}
\def\newrmwntheorem#1#2{\expandafter\@ifdefinable\csname #1\endcsname%
\global\@namedef{#1}{\@rmwnthm{#1}{#2}}%
\global\@namedef{end#1}{\@endrmwntheorem}}
\def\newsltheorem#1{\@ifnextchar[{\@slothm{#1}}{\@slnthm{#1}}}
\def\@slnthm#1#2{%
\@ifnextchar[{\@slxnthm{#1}{#2}}{\@slynthm{#1}{#2}}}
\def\@slxnthm#1#2[#3]{\expandafter\@ifdefinable\csname #1\endcsname
{\@definecounter{#1}\@addtoreset{#1}{#3}%
\expandafter\xdef\csname the#1\endcsname{\expandafter\noexpand
  \csname the#3\endcsname \@slthmcountersep \@slthmcounter{#1}}%
\global\@namedef{#1}{\@slthm{#1}{#2}}\global\@namedef{end#1}{\@endsltheorem}}}
\def\@slynthm#1#2{\expandafter\@ifdefinable\csname #1\endcsname
{\@definecounter{#1}%
\expandafter\xdef\csname the#1\endcsname{\@slthmcounter{#1}}%
\global\@namedef{#1}{\@slthm{#1}{#2}}\global\@namedef{end#1}{\@endsltheorem}}}
\def\@slothm#1[#2]#3{\expandafter\@ifdefinable\csname #1\endcsname
  {\global\@namedef{the#1}{\@nameuse{the#2}}%
\global\@namedef{#1}{\@slthm{#2}{#3}}%
\global\@namedef{end#1}{\@endsltheorem}}}
\def\@slthm#1#2{\refstepcounter
    {#1}\@ifnextchar[{\@slythm{#1}{#2}}{\@slxthm{#1}{#2}}}
\def\@slxthm#1#2{\@beginsltheorem{#2}{\csname the#1\endcsname}\ignorespaces}
\def\@slythm#1#2[#3]{\@opargbeginsltheorem{#2}{\csname
       the#1\endcsname}{#3}\ignorespaces}
\def\@slthmcounter#1{.\noexpand\arabic{#1}}
\def\@slthmcountersep{}
\def\@beginsltheorem#1#2{\sl \trivlist
      \item[\hskip \labelsep{\bf #1\ #2\thmslcounterend}]}
\def\@opargbeginsltheorem#1#2#3{\sl \trivlist
      \item[\hskip \labelsep{\bf #1\ #2\ (#3)\thmslcounterend}]}
\def\@endsltheorem{\endtrivlist}
\def\thmslcounterend{\hskip 0em\relax}
\def\newslwntheorem#1#2{\expandafter\@ifdefinable\csname #1\endcsname%
\global\@namedef{#1}{\@slwnthm{#1}{#2}}%
\global\@namedef{end#1}{\@endslwntheorem}}
\theoremstyle{definition}
\newtheorem{algorithm}{Algorithm}
\numberwithin{equation}{section}
\begin{document}

\providecommand{\keywords}[1]
{
  \small	
  \textsl{Keywords:} #1
}

\providecommand{\ams}[1]
{
  \small	
  \textsl{AMS subject classification:} #1
}

\title{Synchronous deautoconvolution algorithm \\
for discrete-time positive signals \\
via $\ii$-divergence approximation}%
\author{Lorenzo Finesso\thanks{Lorenzo Finesso is with Institute of Electronics, Information Engineering and Telecommunications,
National Research Council, CNR-IEIIT, Padova; email: {\tt lorenzo.finesso@unipd.it}}
 \and
        Peter Spreij\thanks{Peter Spreij is with the Korteweg-de Vries Institute for Mathematics,
Universiteit van Amsterdam and with the  Institute for Mathematics, Astrophysics and Particle Physics, Radboud University, Nijmegen; e-mail: {\tt spreij@uva.nl}}
\thanks{The material in this paper was not presented at any conference.}
}

\maketitle
\begin{abstract}
We pose the problem of the optimal approximation of a given nonnegative signal $y_t$ with the scalar autoconvolution $(x*x)_t$ of a nonnegative signal $x_t$, where $x_t$ and $y_t$ are signals of equal length. The $\ii$-divergence has been adopted as optimality criterion, being well suited to incorporate nonnegativity constraints. To find a minimizer we derive an iterative descent algorithm of the alternating minimization type. The algorithm is based on the lifting of the original problem to a larger space, a relaxation technique developed by Csisz\'ar and Tusn\'ady in [Statistics \& Decisions (S1) (1984), 205--237] which, in the present context, requires the solution of a hard partial minimization problem. We study the asymptotic behavior of the algorithm  exploiting the optimality properties of the partial minimization problems and prove, among other results, that its limit points are Kuhn-Tucker points of the original minimization problem. Numerical experiments illustrate the results.

\keywords{autoconvolution, inverse problem, positive system, $\ii$-di\-ver\-gen\-ce, alternating minimization}

\ams{93B30, 94A17}

\end{abstract}

\section{Introduction}

The focus of the paper is on the inverse problem of positive autoconvolution. Given the data sequence of nonnegative observations $y_t$, $t$ in an appropriate time domain, find a nonnegative signal $x_t$ such that the autoconvolutions $(x*x)_t:=\sum_{i=0}^t x_ix_{t-i}$ form an optimal approximation of the $y_t$. We call such $x_t$ the positive \emph{deautoconvolution} of $y_t$.
The problem of finding the deautoconvolution can be posed in two different setups.
The first is to approximate the data sequence $y_t$, $t=0,\ldots, n$, with the autoconvolutions $(x*x)_t$ of a finite support signal $x_t$, $t=0,\ldots, m$. Since the support of $(x*x)_t$ is $[0, 2m]$, in this setup $n=2m$, i.e.\ the support of the deautoconvolution is half of the support of $y_t$, hence we call this setup the half support case.
An algorithm for finding the deautoconvolution $x_t$ in the half support case has been proposed, and its properties have been discussed, in the companion paper \cite{fs2021inverse}. The second approach is to approximate the data sequence $y_t$, $t=0,\ldots n$ with the initial segment of the autoconvolution $(x*x)_t$, $t=0,\dots, n$ of a signal $x_t$ of long, unspecified length. In this case the problem is to find the initial segment of the deautoconvolution, i.e.\ the signal $x_t$, $t=0,\ldots, n$. In this approach the data sequence $y_t$ and the deautoconvolution $x_t$ have common support, $t=0,\ldots, n$, hence this is called the full support or \emph{synchronous} case. The present paper deals with the synchronous setup. Despite obvious similarities between the two setups, it turns out that the algorithm to construct the deautoconvolution is considerably more involved in the synchronous setup, and the results strongly differ between the two cases. A similar situation, with two alternative setups, arises also for the simpler problem of nonnegative deconvolution, in which one approximates optimally the given data sequence $y_t$ with the convolution $(h*x)_t$, where $h_t$ is known and $x_t$ is to be determined ($y_t$, $h_t$ and $x_t$ being all nonnegative). Compare with \cite{finessospreij2015ieeeit}, \cite{finessospreij2019automatica}.

Extensive work has been dedicated to the inverse problem of autoconvolution for functions on the real line, mostly emphasizing the functional analytic aspects and motivating its interest in a variety of applications in physics and engineering. We briefly mention in this context the references \cite{dose1981inversion}, \cite{martinez1979global}, \cite{hofmann2006determination}, \cite{douglas2014autoconvolution}.
The main differences between the cited literature for deconvolution of functions and this paper are that we consider approximation problems, rather than looking for exact solutions which exist only exceptionally, and that our (time) domain is discrete rather than the real line. Moreover, the nonnegativity constraint, that we impose on all signals, is a crucial feature of the present work.
Some earlier contributions share, at least in part, our point of view, e.g.\ the papers~\cite{choilanterman2005}, \cite{choilantermanraich2006}, contain an algorithm of the same type as ours, but valid in the half support case. For more detailed comments on ~\cite{choilanterman2005}, \cite{choilantermanraich2006}, see \cite{fs2021inverse}.

The purpose of this paper is threefold. First we pose the problem of time domain approximation of a given discrete time nonnegative data sequence with finite autoconvolutions. Following the choice made in other optimization problems, we opt for the $\ii$-divergence as a criterion, which as argued in~\cite{cs1991} (see also \cite{snyderetal1992}), is the natural choice under nonnegativity constraints. We provide a result on the existence of the minimizer of the approximation criterion.
Then we propose an iterative algorithm, in the spirit of \cite{ct1984}, to find the best approximation, and present a number of results on its behavior. Our approach -- alternating minimization -- is based on earlier works like~\cite{fs2006} on nonnegative matrix factorization, \cite{finessospreij2015ieeeit}, \cite{finessospreij2019automatica}, on  \emph{linear} convolutional problems, in contrast to the inherently nonlinear autoconvolution.

The inherent nonconvexity, and nonlinearity of the problem make the analysis of the asymptotic behavior difficult, even more so because of the existence of several local minima of the objective function and possibly identification issues as well.
The main result in this respect is Theorem~\ref{thm:kt} which states that limit points of the algorithm satisfy the Kuhn-Tucker optimality conditions, which is, in formulation, similar to the corresponding result in \cite{fs2021inverse}. In the contest of nonlinear non convex optimization problems, like the present one, to the best of our knowledge there are no asymptotic results that improve on our Theorem~\ref{thm:kt}.

A brief summary of the paper follows. In Section~\ref{section:problem} we state the full support minimization problem, show the existence of a solution and give some of its properties. In Section~\ref{section:lift} the original problem is lifted into a higher dimensional setting, thus making it amenable to alternating minimization. The optimality properties (Pythagoras rules) of the ensuing partial minimization problems are discussed here. It turns out that one of these partial minimization problems has a very complex solution, whose precise representation will be derived in the appendix. In Section~\ref{section:algorithm} we derive the iterative minimization algorithm combining the solutions of the partial minimizations, and analyse its properties. In particular we show that limit points of the algorithm are Kuhn-Tucker points of  the original optimization problem. In the concluding Section~\ref{section:numerics} we present numerical experiments that shed light on the theoretical results concerning the behaviour of the algorithm and also highlight some unexpected phenomena.

\section{Problem statement and preliminaries}\label{section:problem}

In the paper we consider real valued signals $x: \mathbb Z \rightarrow \mathbb R$, maps $i \mapsto x_i$, that vanish for $i<0$, i.e., $x_i=0$ for $i<0$. The \emph{support} of $x$ is the discrete time interval, $[0, n]$, where $n=\inf\{\,k:\,\, x_i=0,\,\,\, \text{for $i> k$}\,\}$, if the infimum is finite (and then a minimum), or $[0, \infty)$ otherwise. The autoconvolution of $x$ is the signal $x* x$, vanishing for $i<0$, and satisfying,
\begin{equation}\label{eq:xconv}
(x* x)_i =\sum_{j=-\infty}^\infty x_{i-j}x_j =
                 \sum_{j=0}^i x_{i-j}x_j\,,  \qquad i \ge 0.
\end{equation}
Notice that if the support of $x$ is finite $[0, n]$, the support of $x* x$ is $[0,2n]$. In this case, when computing $(x * x)_i$ for $i>n$, the summation in \eqref{eq:xconv} has non zero addends only in the range $i-n\le j \le n$, as $x_{i-j}=0$ and $x_j=0$ for $i-j>n$ and $j>n$ respectively. If the signal $x$ is nonnegative, i.e.\ $x_i \ge0$ for all $i\in \mathbb Z$,  the autoconvolution (\ref{eq:xconv}) is too.

The paper addresses the inverse problem of autoconvolution. Given a finite \emph{nonnegative} data sequence
$$
y=(y_0,\dots , y_n)\,,
$$
find a \emph{nonnegative} signal $x$ whose autoconvolution $x*x$ best approximates $y$. Since the signals involved are nonnegative, it is natural to adopt the $\ii$-divergence as the approximation criterion, see~\cite{c1975,cs1991}. Recall that the $\ii$-divergence between two nonnegative vectors $u$ and $v$ of equal length is
\[
\ii(u,v)=\sum_i \Big(u_i\log\frac{u_i}{v_i}-u_i+v_i\Big)\,,
\]
if $u_i=0$ whenever $v_i=0$, and $\ii(u,v)=\infty$ if there exists an index $i$ with $u_i>0$ and $v_i=0$. It is known that $\ii(u,v)\geq 0$, with equality iff $u=v$.

The approximation problem depends on the constraints imposed on the support of $x$. In Problem~\ref{problem} below, the signal $x$ is assumed to have the same length of the data sequence, $x=(x_0,\dots, x_n)$ and produces the approximation problem specified below, where we write $x* x\in\rr^{n+1}$ for the restriction to $[0, n]$ of the convolution defined in~\eqref{eq:xconv}. This version of the problem arises naturally when the observed data sequence is the initial segment, $y=(y_0,\ldots,y_n)$, of a signal $y$ of long unspecified length, modeled with the initial segment of the autoconvolution $x*x$ of a signal $x$ of long unspecified length.
\begin{problem}\label{problem}
Given $y\in\rr^{n+1}_+$ minimize, over $x\in\rr^{n+1}_+=[0,\infty)^{n+1}$,
\begin{equation} \label{cost-full}
\ii=\ii(x):=\ii(y||x* x)=\sum_{i=0}^n\Big(y_i\log\frac{y_i}{(x* x)_i}-y_i+(x* x)_i\Big)\,.
\end{equation}
\end{problem}
The objective function \eqref{cost-full} is nonconvex and nonlinear in $x$, the existence of a minimizer is therefore not immediately clear. Our first result settles in the affirmative the question of the existence. The issue of uniqueness remains open, but we have evidence of the existence of multiple local minima of $\ii(x)$. See Section~\ref{section:numerics} for numerical examples.
\begin{proposition}\label{proposition:exist}
Let $y_0>0$, then Problem~\ref{problem} admits a  solution.
\end{proposition}

\begin{proof} Deferred to Section~\ref{section:algorithm} as we need properties of the algorithm of that section.
\end{proof}
For later reference we compute the components of the gradient $\nabla\ii(x)$ of the objective function (\ref{cost-full}).
\begin{equation}\label{eq:iprime}
\nabla\!_k\ii(x):=\frac{\partial \ii(x)}{\partial x_k}=-2\sum_{j=0}^{n-k}\,\Big(\frac{y_{j+k}}{(x*x)_{j+k}} - 1\Big)\,x_{j}.
\end{equation}
We will also often use the notations
\begin{align}
\widehat y_i &:= (x*x)_i, \label{eq:yhat}\\[2pt]
\rho_i &:= \frac{y_i}{\widehat y_i}, \label{eq:rho}
\end{align}
and as a result we have the following alternative ways of writing Equation~\eqref{eq:iprime},
\begin{align*}
\nabla\!_k\ii(x) & = -2\sum_{j=0}^{n-k}\big(\rho_{k+j}-1\big)x_{j}=  -2\sum_{j=k}^n\big(\rho_j-1\big)x_{j-k},
\end{align*}
and then also
\begin{equation}\label{eq:nablarho}
\sum_{j=k}^n\rho_jx_{j-k}  = -\half\nabla\!_k\ii(x)+\sum_{j=k}^n x_{j-k}.
\end{equation}
The expressions in \eqref{eq:iprime} are highly nonlinear in $x$ and solving the first order optimality conditions $\nabla \ii(x)=0$ to find the stationary points of \eqref{cost-full}, will not result in closed form solutions except in trivial cases. This observation calls for a numerical approach to the optimization, which we will present in Section~\ref{section:algorithm}.
First a useful property of minimizers of $\ii$.
\begin{proposition}\label{prop:sum}
Minimizers $x^\star$ are such that $\sum_{j=0}^n(x^\star*x^\star)_j=\sum_{j=0}^ny_j$.
\end{proposition}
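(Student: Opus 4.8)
The plan is to exploit that $x\mapsto x*x$ is homogeneous of degree two and to reduce the statement to a one-dimensional optimality condition along the ray through $x^\star$. First I would record that for $\lambda>0$ one has $(\lambda x*\lambda x)_i=\lambda^2(x*x)_i$, substitute $\lambda x^\star$ into the objective \eqref{cost-full}, and pull the $\lambda^2$ out of the logarithm to obtain, with $S:=\sum_{j=0}^n y_j$ and $T:=\sum_{j=0}^n(x^\star*x^\star)_j$,
\[
\ii(\lambda x^\star)=\ii(x^\star)-2S\log\lambda+(\lambda^2-1)T .
\]
For this manipulation to be legitimate I need $\ii(x^\star)<\infty$, which holds for any minimizer of interest (otherwise the statement is vacuous), and this in turn forces $(x^\star*x^\star)_i>0$ whenever $y_i>0$, so every logarithm above is finite.

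Next I would view $\phi(\lambda):=\ii(\lambda x^\star)$ as a smooth function on $(0,\infty)$. Here one uses that $x^\star\neq 0$: since $y_0>0$, finiteness of $\ii(x^\star)$ forces $(x^\star*x^\star)_0=(x_0^\star)^2>0$, so $x_0^\star>0$ and every scaling $\lambda x^\star$ lies in $\rr^{n+1}_+$. Because $x^\star$ is a global minimizer of $\ii$ over $\rr^{n+1}_+$, the point $\lambda=1$ is an interior minimizer of $\phi$, hence $\phi'(1)=0$. Differentiating the displayed identity gives $\phi'(\lambda)=-2S/\lambda+2\lambda T$, so $\phi'(1)=-2S+2T=0$, i.e.\ $S=T$, which is exactly the claim. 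One may also note $\phi''(\lambda)=2S/\lambda^2+2T>0$, so $\lambda=1$ is the unique minimizer of $\phi$, consistently with $x^\star$ being optimal.

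I do not anticipate a genuine obstacle; the only points requiring a moment of care are the bookkeeping items just mentioned — that a minimizer has finite cost and is nonzero — both of which follow immediately from $y_0>0$. As an alternative route one could invoke the first-order (Kuhn--Tucker) necessary conditions at $x^\star$: since $\ii$ is $C^1$ in a neighbourhood of $x^\star$, complementary slackness yields $x_k^\star\,\nabla\!_k\ii(x^\star)=0$ for every $k$, and summing the expression \eqref{eq:iprime} against $x_k^\star$ telescopes, using $\sum_k x_k^\star\sum_{j=k}^n(\rho_j-1)x_{j-k}^\star=\sum_{j=0}^n(\rho_j-1)(x^\star*x^\star)_j$, to $-2\bigl(\sum_j y_j-\sum_j(x^\star*x^\star)_j\bigr)=0$. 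I would present the scaling argument as the primary proof, since it is self-contained and does not presuppose that minimizers satisfy the KT conditions.
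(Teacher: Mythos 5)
Your proof is correct and is essentially the paper's own argument: the paper also sets $f(\alpha)=\ii(\alpha x^\star)$, uses the degree-two homogeneity of the autoconvolution to compute $f'(\alpha)=-\tfrac{2}{\alpha}\sum_i y_i+2\alpha\sum_i(x^\star*x^\star)_i$, and concludes from $f'(1)=0$. Your additional bookkeeping (finiteness of the cost, $x_0^\star>0$) and the alternative Kuhn--Tucker route are fine but not needed beyond what the paper records.
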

\begin{proof}
Suppose that $x^\star$ is a minimizer of $\ii(x)$ and
let $f(\alpha)=\ii(\alpha x^\star)$, $\alpha>0$. Then $f'(1)=0$. A direct computation of $f(\alpha)$ gives after differentiation $f'(\alpha)=-\frac{2}{\alpha}\sum_{i=0}^ny_i+2\alpha\sum_{i=0}(x^\star*x^\star)_i$. Hence $f'(1)=0$ yields the result.
\end{proof}
We now give some examples, where the minimizer can be explicitly calculated. These are exceptional cases.
\begin{example}\label{example:n=1}
Let $n=1$. We will look at four cases.

(i) $y_0=y_1=0$. Here $\ii(x)=x_0^2+2x_0x_1$ and we observe that $\ii$ is minimized for $x_0=0$ and arbitrary $x_1$. The minimum is not uniquely attained.

(ii) $y_0=0<y_1$. Then $\ii(x)=y_1\log\frac{y_1}{2x_0x_1}-y_1+x_0^2+2x_0x_1$. Let $x_0,x_1$ be such that $2x_0x_1=y_1$, then $\ii(x)=x_0^2$ and we see that the infimum of $\ii(x)$ is not attained.

(iii) $y_1= 0<y_0$. Then $\ii(x)=y_0\log\frac{y_0}{x_0^2}-y_0+x_0^2+2x_0x_1$.
Here one shows that the minimium is uniquely attained for a boundary point $x$ with $x_0=\sqrt{y_0}$ and $x_1=0$, which gives $\ii(x)=0$. At the minimum we have that the gradient $\nabla \ii(x)=(0,2x_0)$.

(iv) $y_0,y_1>0$. The minimum is uniquely attained at an interior point $x$ of the domain with $x_0=\sqrt{y_0}$ and $x_1=\frac{y_1}{2\sqrt{y_0}}$. At the minimum we have $\nabla \ii(x)=(0,0)$.

The message of this example is that $y_0=0$ (cases (i) and (ii)) displays undesired properties when searching for a minimum, and that minimizers at the boundary of the domain occur.
\end{example}

\begin{example}\label{example:n=2}
Let $n=2$ and $y_0>0$. Then it depends on the specific values of the $y_k$ whether the problem has a solution in the interior of the domain or on its boundary. The former happens if $y_1^2<4y_0y_2$, which gives the minimizing
\begin{align*}
x_0 & =\sqrt{y_0} \\
x_1 & =\frac{y_1}{2\sqrt{y_0}} \\
x_2 & =\frac{y_2-\frac{y_1^2}{4y_0}}{2\sqrt{y_0}}\,,
\end{align*}
and then the divergence equals zero. If $y_1^2
\geq 4y_0y_2$, then the solution is quite different. In this case the minimizers become
\begin{align*}
x_0 & =\frac{y_0+\frac{y_1}{2}}{\sqrt{y_0+y_1+y_2}} \\
x_1 & =\frac{y_2+\frac{y_1}{2}}{\sqrt{y_0+y_1+y_2}} \\
x_2 & =0\,.
\end{align*}
Of course the two formulas coincide in the boundary case $y_1^2=4y_0y_2$, which implies $y_0+y_1+y_2=(\sqrt{y_0}+\sqrt{y_2})^2$, as they should. Note that in the second case the partial derivatives $\frac{\partial \ii}{\partial x_j}$ are zero at the minimizer for $j=0,1$, whereas
\[
\frac{\partial \ii}{\partial x_2}
=\frac{\big(y_0+\frac{y_1}{2}\big)\big(\frac{y_1^2}{4}-y_0y_2\big)}{\big(y_2+\frac{y_1}{2}\big)^2\sqrt{y_0+y_1+y_2}}\geq 0\,.
\]
\end{example}
The next proposition shows that solutions to Problem~\ref{problem} at which the gradient vanishes are exceptions rather than the rule.
\begin{proposition}
Assume $y_0>0$. A point $x\in\rr^{n+1}_+$ is a solution at which the gradient $\nabla\ii$ vanishes iff it gives a perfect match with the $y_k$, i.e.\ $(x*x)_k=y_k$ for all $k=0,\ldots,n$.
\end{proposition}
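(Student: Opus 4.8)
The plan is to prove the two implications separately. The easy direction is that a perfect match $(x*x)_k = y_k$ for all $k$ forces $\nabla\ii(x) = 0$: indeed, from the gradient formula \eqref{eq:iprime}, if $\rho_j = y_j/(x*x)_j = 1$ for all $j$, then every summand $(\rho_{k+j}-1)x_j$ vanishes, so $\nabla\!_k\ii(x) = 0$ for each $k$. (One also notes that a perfect match gives $\ii(x) = 0$, the global minimum, so such an $x$ is automatically a minimizer as well as an internal solution.) The substantive direction is the converse: assuming $y_0 > 0$ and $\nabla\ii(x) = 0$, I must show $(x*x)_k = y_k$ for all $k = 0,\ldots,n$.

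For the converse, the natural tool is \eqref{eq:nablarho}, which under $\nabla\ii(x) = 0$ reads $\sum_{j=k}^n \rho_j x_{j-k} = \sum_{j=k}^n x_{j-k}$, i.e. $\sum_{j=k}^n (\rho_j - 1) x_{j-k} = 0$ for every $k = 0,\ldots,n$. I would argue by a downward induction on $k$, starting from $k = n$. For $k = n$ the equation is $(\rho_n - 1)x_0 = 0$. Here I need $x_0 > 0$: since $y_0 > 0$ and $\ii(x) < \infty$ is required at any candidate solution (otherwise it is not even a stationary point in a meaningful sense — or, more carefully, one uses that an internal solution is in particular a point where $\ii$ is finite and the right-derivatives vanish), we must have $(x*x)_0 = x_0^2 > 0$, hence $x_0 > 0$. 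Therefore $\rho_n = 1$, i.e. $(x*x)_n = y_n$. Now suppose inductively that $\rho_j = 1$ for all $j > k$; the equation for this $k$ becomes $\sum_{j=k}^n (\rho_j-1)x_{j-k} = (\rho_k - 1)x_0 + \sum_{j=k+1}^n (\rho_j - 1)x_{j-k} = (\rho_k-1)x_0$, and since $x_0 > 0$ this yields $\rho_k = 1$, i.e. $(x*x)_k = y_k$. This completes the induction and the proof.

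The one delicate point — the main obstacle — is justifying $x_0 > 0$ and, relatedly, making precise the standing assumption that at an internal solution $\ii(x)$ is finite so that the ratios $\rho_j$ are well-defined. If $x_0 = 0$ then $(x*x)_0 = 0$ while $y_0 > 0$, so $\ii(x) = +\infty$; such a point cannot be a solution of the minimization (Proposition~\ref{proposition:exist} guarantees a finite minimum when $y_0 > 0$, since e.g. $x = (\sqrt{y_0}, 0, \ldots, 0)$ already gives a finite value), so it is excluded from consideration as an ``internal solution''. I would spell this out explicitly at the start of the converse argument. A secondary subtlety is that $\nabla\ii(x)$ in \eqref{eq:iprime} implicitly presumes $(x*x)_{j+k} > 0$ whenever $y_{j+k} > 0$; but the chain $x_0 > 0$ propagates: $(x*x)_0 = x_0^2 > 0$, and more to the point the induction only ever uses $(\rho_j - 1)$ for indices $j$ where we have already shown $\rho_j$ makes sense, the base case $j = n$ needing only $(x*x)_n \ge x_0^2 \cdot [\text{coefficient}]$... in fact $(x*x)_n \geq 2x_0 x_n$, which need not be positive, so one should instead observe directly that finiteness of $\ii(x)$ forces $(x*x)_j > 0$ for every $j$ with $y_j > 0$, and handle indices with $y_j = 0$ by noting $\rho_j = 0$ there, turning the induction equation at such $k$ into $-x_0 + (\text{terms with }\rho_j - 1 = 0 \text{ or handled}) $; this still gives a contradiction with $x_0 > 0$ unless in fact $y_k > 0$. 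Cleanest is to first dispose of the case where some $y_k = 0$: then a perfect match would require $(x*x)_k = 0$, and the proposition's claim must still be shown consistent — so I would either restrict attention implicitly via the finiteness of $\ii$, or remark that if $y_k = 0$ for some $k$ the equation $\sum_{j=k'}^n(\rho_j-1)x_{j-k'} = 0$ at the appropriate level still forces, by the same downward induction using $x_0 > 0$, that each $\rho_j \in \{0\} \cup \{1\}$ and in fact equals the matching value; I expect the referee-safe write-up to simply carry the downward induction through, using at each step only that $x_0 > 0$ and that the higher-index terms have already been pinned down.
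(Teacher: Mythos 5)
Your argument is essentially identical to the paper's proof: the forward direction is read off from \eqref{eq:iprime}, and the converse is the same downward induction on $k$ from $n$ to $0$ using $x_0>0$ (which the paper asserts without the justification you supply). Your extended discussion of indices with $y_k=0$ is extra care beyond what the paper provides, but it does not change the method.
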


\begin{proof}
Consider Equation~\eqref{eq:iprime}. From this equation one immediately sees that an exact match produces a solution where the gradient vanishes.
To show the converse, note first that $x_0>0$ and let $\nabla\ii(x)=0$. Then, by virtue of~\eqref{eq:iprime}, for all $k=0,\ldots,n$ one has $\sum_{j=0}^{n-k}\big(\frac{y_{j+k}}{(x*x)_{j+k}}-1\big)x_{j}=0$. For $k=n$ this equation reads $(\frac{y_{n}}{(x*x)_{n}}-1)x_0=0$ and hence $y_n=(x*x)_{n}$. For $k=n-1$, the equation becomes $(\frac{y_{n-1}}{(x*x)_{n-1}}-1)x_{0}+(\frac{y_{n}}{(x*x)_{n}}-1)x_{1}=0$, from which we can now deduce $y_{n-1}=(x*x)_{n-1}$. Counting $k$ down to zero yields the assertion.
\end{proof}

\section{Lifting}\label{section:lift}

Following the approach of the companion paper \cite{fs2021inverse} we recast Problem~\ref{problem} as a double minimization problem by lifting it into a larger space, a necessary step to derive the minimization algorithm.
Related work with similar techniques is~\cite{fs2006} to analyse a nonnegative matrix factorization problem, and closer to the present paper \cite{finessospreij2015ieeeit}, \cite{finessospreij2019automatica}, noting that the latter references treat linear convolutional problems, whereas the autoconvolution is inherently nonlinear.

The ambient spaces for the lifted problem are the subsets $\bby$ and $\bbw$, defined below, of the set of matrices $\rr^{(n+1)\times (n+1)}_+$,
$$
\bby := \Big\{\,\by  : \,\,\, \textstyle{\sum_j}\by_{ij} = y_i\,, \quad\text{and}\quad \by_{ij}=0,\,\, \text{for}\,\, i < j\, \Big\}\,,
$$
where $y=(y_0,\dots, y_{n})\in\mathbb R^{n+1}_+$ is the given data vector, and
$$
\bbw := \Big\{\,\bw  : \,\,\, \bw_{ij} = x_{i-j}x_j\,,\,\,\text{if}\,\, j\leq i\,,\quad\text{and}\quad \bw_{ij}=0\,,\,\,\text{for}\,\, i < j \,\Big\}\,,
$$
where $x=(x_0,\ldots x_n)$ are nonnegative parameters. The structure of the matrices in $\bby$ and $\bbw$ is shown below for $n=6$,

\medskip
\small
$$
\by=\begin{pmatrix}
\by_{00} & 0 & 0 & 0 & 0 & 0 & 0\\
\by_{10} & \by_{11}& 0 & 0 & 0 & 0 & 0\\
\by_{20} & \by_{21}& \by_{22} & 0 & 0 & 0 & 0\\
\by_{30} & \by_{31}& \by_{32} & \by_{33} & 0 & 0 & 0 \\
\by_{40} & \by_{41} & \by_{42} & \by_{43} & \by_{44} & 0 & 0 \\
\by_{50} & \by_{51} & \by_{52} & \by_{53} & \by_{54} & \by_{55} & 0 \\
\by_{60} & \by_{61} & \by_{62} & \by_{63} & \by_{64} & \by_{65} & \by_{66}
\end{pmatrix},
$$

\bigskip
$$
\bw=\begin{pmatrix}
x_0x_0 & 0      & 0      & 0      & 0      & 0      & 0\\
x_1x_0 & x_0x_1 & 0      & 0      & 0      & 0      & 0\\
x_2x_0 & x_1x_1 & x_0x_2 & 0      & 0      & 0      & 0\\
x_3x_0 & x_2x_1 & x_1x_2 & x_0x_3 & 0      & 0      & 0 \\
x_4x_0 & x_3x_1 & x_2x_2 & x_1x_3 & x_0x_4 & 0      & 0 \\
x_5x_0 & x_4x_1 & x_3x_2 & x_2x_3 & x_1x_4 & x_0x_5 & 0 \\
x_6x_0 & x_5x_1 & x_4x_2 & x_3x_3 & x_2x_4 & x_1x_5 & x_0x_6
\end{pmatrix}\,.
$$
\medskip\\
\normalsize
The interpretation is as follows. The matrices $\by\in\bby$ and $\bw\in\bbw$ have common support on  (sub)diagonals. The row marginal (i.e.\ the column vector of row sums) of any $\by\in \bby$ coincides with the given data vector $y$. The elements of the $\bw$ matrices factorize and their row marginals are the autoconvolutions $(x*x)_i$, $i=0,\ldots,n$. Note also that the elements $\bw\in\bbw$ exhibit the symmetry $\bw_{ij}=\bw_{i,i-j}$, $i\geq j$.
\medskip\\
We introduce two partial minimization problems over the subsets $\bby$ and $\bbw$. Recall that the $\ii$-divergence between two nonnegative matrices of the same sizes $M, N\in\rr^{p\times q}_+$ is defined as
\[
\ii(M||N) := \sum_{i,j} \Big( M_{ij} \log \frac{M_{ij}}{N_{ij}} - M_{ij} + N_{ij}\Big)\,.
\]

\begin{problem}\label{problemy}
Minimize
$\ii(\by||\bw)$ over $\by\in\bby$ for given $\bw\in\bbw$.
\end{problem}

\begin{problem}\label{problemw}
Minimize
$\ii(\by||\bw)$ over $\bw\in\bbw$ for given $\by\in\bby$.
\end{problem}
The relation between the original Problem~\ref{problem} and the partial minimization Problems~\ref{problemy} and \ref{problemw} is as follows.

\begin{proposition}
It holds that
\[
\min_{x\in\rr^{n+1}_+}\ii(x)=\min_{\by\in\bby}\min_{\bw\in\bbw}\ii(\by||\bw),
\]
where in all three cases the minima are attained.
\end{proposition}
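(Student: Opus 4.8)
The plan is to collapse the double minimization on the right to the single minimization on the left by carrying out the inner minimization over $\by$ in closed form. The key observation is that any $\bw\in\bbw$ arises, by definition of $\bbw$, from a parameter vector $x\in\rr^{n+1}_+$ via $\bw_{ij}=x_{i-j}x_j$, and then its row marginals are exactly $\sum_j\bw_{ij}=(x*x)_i=\widehat y_i$, whereas every $\by\in\bby$ has row marginals $\sum_j\by_{ij}=y_i$. One therefore expects $\min_{\by\in\bby}\ii(\by||\bw)=\ii(y||x*x)=\ii(x)$, with the inner minimum attained at the ``conditional reconstruction'' $\by^\star_{ij}:=y_i\,\bw_{ij}/\widehat y_i$, where a row with $\widehat y_i=0$ is set to zero (which is consistent, since $\ii(\by||\bw)<\infty$ forces $y_i=0$ in such a row).

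The first step is to make this precise by a row\-/wise decomposition of the divergence. For each row $i$ with $y_i>0$ and $\widehat y_i>0$ the vectors $(\by_{ij}/y_i)_j$ and $(\bw_{ij}/\widehat y_i)_j$ are probability vectors in $j$, and from $\log\frac{\by_{ij}}{\bw_{ij}}=\log\frac{y_i}{\widehat y_i}+\log\frac{\by_{ij}/y_i}{\bw_{ij}/\widehat y_i}$ together with $\sum_j\by_{ij}=y_i$ and $\sum_j\bw_{ij}=\widehat y_i$, one gets after summing over $i,j$
\[
\ii(\by||\bw)=\ii(y||x*x)+\sum_{i}y_i\sum_j\frac{\by_{ij}}{y_i}\log\frac{\by_{ij}/y_i}{\bw_{ij}/\widehat y_i}.
\]
Each inner sum is a relative entropy between probability vectors, hence nonnegative and zero precisely when $\by_{ij}/y_i=\bw_{ij}/\widehat y_i$ for all $j$; the degenerate rows ($y_i=0$, or $y_i>0=\widehat y_i$) are handled separately via the conventions defining $\ii$. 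Consequently $\ii(\by||\bw)\ge\ii(x)$ for all $\by\in\bby$, with equality at $\by=\by^\star$, so $\min_{\by\in\bby}\ii(\by||\bw)=\ii(x)$ and this minimum is attained.

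The second step is to assemble the three minima. Iterated infima may be reordered, so $\min_{\by\in\bby}\min_{\bw\in\bbw}\ii(\by||\bw)=\min_{\bw\in\bbw}\min_{\by\in\bby}\ii(\by||\bw)=\min_{\bw\in\bbw}\ii(x(\bw))$, and since $\bbw=\{\bw(x):x\in\rr^{n+1}_+\}$ by definition, the last quantity is $\min_{x\in\rr^{n+1}_+}\ii(x)$, which is finite (e.g.\ $\ii$ is finite at $x=(\eps,\dots,\eps)$, whose autoconvolution has all entries positive) and attained at some $x^\star$ by Proposition~\ref{proposition:exist}. Since $\ii(x^\star)<\infty$, we have $\widehat y^\star_i>0$ whenever $y_i>0$, so $\bw^\star:=\bw(x^\star)$ and the associated $\by^\star$ lie in $\bbw$ and $\bby$ and satisfy $\ii(\by^\star||\bw^\star)=\ii(x^\star)=\min_x\ii(x)$, which by the displayed inequality is the global minimum of $\ii(\by||\bw)$; thus all three minima are attained and equal.

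The main obstacle is the bookkeeping around the degenerate rows where $\widehat y_i=(x*x)_i=0$: there the normalized vectors used in the decomposition are undefined, and one must fall back on the definition of $\ii(u,v)$ with vanishing denominators, checking that finiteness of $\ii(\by||\bw)$ forces $y_i=0$ (hence $\by_{ij}=0$) in such a row so that the identity above still holds term by term, with both sides infinite in the excluded case $y_i>0=\widehat y_i$. Everything else is the standard chain rule for I\-/divergence together with the reordering of iterated infima.
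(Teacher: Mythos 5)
Your proof is correct and follows essentially the same route as the paper, which simply defers to Proposition~3.8 of the companion paper: your row-wise chain-rule decomposition of $\ii(\by||\bw)$ is exactly the Pythagorean identity of Lemma~\ref{lemmay}, and identifying $\bbw$ with the parameter set $\rr^{n+1}_+$ collapses the remaining minimization to Problem~\ref{problem}. The only caveat, which the paper shares, is that attainment of $\min_x\ii(x)$ via Proposition~\ref{proposition:exist} implicitly requires $y_0>0$.
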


\begin{proof}
The proof is almost the same as that of \cite[Proposition~3.8]{fs2021inverse}.
\end{proof}
The partial minimization problems are instrumental in deriving the algorithm in Section~\ref{section:algorithm} for which one needs that these two problems have solutions in closed form.
For Problem~\ref{problemy} it can easily be given, Lemma~\ref{lemmay}. On the other hand, the solution to Problem~\ref{problemw}, see Lemma~\ref{lemmaw} that relies on results in Appendix~\ref{app:equation}, turns out to be substantially more complicated and therefore radically differs from the solution to the companion Problem~3.2 in \cite{fs2021inverse}. The remainder of this section is devoted to properties of the solutions to the partial minimization problems.
\begin{lemma}\label{lemmay}
Problem~\ref{problemy} has the explicit minimizer $\by^\star\in\bby$ given by
\begin{align*}
\by^\star_{ij} & = \frac{y_i}{\sum_j\bw_{ij}}\bw_{ij} \\
& = \frac{x_jx_{i-j}}{\sum_jx_jx_{i-j}}y_i=\frac{x_jx_{i-j}}{(x*x)_i}y_i\,,\label{eq:ystar}
\end{align*}
for $\bw\in\bbw$ and $i\geq j$. Moreover, $\ii(\by^\star||\bw)=\ii(y||x*x)$ and the \emph{Pythagorean identity}
\[
\ii(\by||\bw)=\ii(\by||\by^\star)+\ii(\by^\star||\bw)\,,
\]
holds for any $\by\in\bby$.
\end{lemma}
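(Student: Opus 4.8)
The plan is to solve Problem~\ref{problemy} by the standard I-divergence projection argument: for fixed $\bw\in\bbw$, the minimization of $\ii(\by\|\bw)$ over $\by\in\bby$ decouples across the rows of $\by$, since both the objective and the constraint set $\bby$ impose conditions row-by-row (each row $i$ has entries $\by_{ij}$, $j=0,\dots,i$, constrained only by $\sum_j\by_{ij}=y_i$, with no coupling between distinct rows). So I would first fix a row index $i$ and minimize $\sum_{j\le i}\big(\by_{ij}\log\frac{\by_{ij}}{\bw_{ij}}-\by_{ij}+\bw_{ij}\big)$ over nonnegative $(\by_{ij})_{j\le i}$ subject to $\sum_{j\le i}\by_{ij}=y_i$.

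For this scalar constrained problem I would use a Lagrange multiplier (or, more cleanly, invoke the log-sum inequality): introduce $\lambda$ for the equality constraint, set the derivative $\log\frac{\by_{ij}}{\bw_{ij}}+\lambda=0$, obtain $\by_{ij}=c\,\bw_{ij}$ for a row-dependent constant $c=e^{-\lambda}$, and fix $c$ from $\sum_j\by_{ij}=y_i$ to get $c=y_i/\sum_j\bw_{ij}$. This yields $\by^\star_{ij}=\frac{y_i}{\sum_j\bw_{ij}}\bw_{ij}$, and substituting $\bw_{ij}=x_jx_{i-j}$ and $\sum_j\bw_{ij}=(x*x)_i$ gives the second displayed form \eqref{eq:ystar}. (One should note the degenerate case where $(x*x)_i=0$: then necessarily $y_i$ would have to be $0$ for finiteness, or the divergence is $+\infty$; this edge case is handled by the usual conventions, and since in the algorithm $x_0>0$ one has $(x*x)_i>0$ whenever needed.) The claim $\ii(\by^\star\|\bw)=\ii(y\|x*x)$ then follows by direct substitution: $\by^\star_{ij}\log\frac{\by^\star_{ij}}{\bw_{ij}}=\by^\star_{ij}\log\rho_i$ with $\rho_i=y_i/(x*x)_i$, and summing over $j$ in row $i$ gives $y_i\log\rho_i$; the $-\by_{ij}+\bw_{ij}$ terms sum to $-y_i+(x*x)_i$.

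The Pythagorean identity is then a computation rather than a genuine obstacle. For arbitrary $\by\in\bby$ I would expand $\ii(\by\|\by^\star)+\ii(\by^\star\|\bw)$ and compare with $\ii(\by\|\bw)$. The key algebraic fact is that, since $\by^\star_{ij}=c_i\bw_{ij}$ with $c_i=\rho_i$ constant along row $i$, one has $\log\frac{\by^\star_{ij}}{\bw_{ij}}=\log\rho_i$, and the cross term in $\ii(\by\|\by^\star)$, namely $\sum_{ij}\by_{ij}\log\frac{\by^\star_{ij}}{\bw_{ij}}=\sum_i\log\rho_i\sum_j\by_{ij}=\sum_i y_i\log\rho_i$, which is exactly the ``correction'' needed so that the $\by\log\by$ and $\by\log\bw$ terms recombine. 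Checking that the linear ($-\by+\bw$, $-\by^\star+\bw$, $-\by+\by^\star$) terms also match up uses $\sum_j\by_{ij}=\sum_j\by^\star_{ij}=y_i$ for every $\by,\by^\star\in\bby$, which makes the linear contributions telescope correctly. I expect the only point requiring care — and hence the ``hard part,'' such as it is — is bookkeeping the support/zero conventions so that all the $\log$ terms are well defined and the identity holds as an identity in $[0,+\infty]$ (both sides simultaneously infinite when some $\by_{ij}>0$ while $\bw_{ij}=0$), but this is routine for I-divergence projections onto a linear family.
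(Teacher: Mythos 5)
Your proposal is correct and follows essentially the same route as the paper, which simply states that the lemma holds ``by direct computation'' (deferring to the companion paper): row-by-row decoupling of both the objective and the constraint set $\bby$, proportional allocation $\by^\star_{ij}=\rho_i\bw_{ij}$ via the log-sum inequality or a Lagrange multiplier, and verification of the Pythagorean identity using the fact that $\log\frac{\by^\star_{ij}}{\bw_{ij}}=\log\rho_i$ is constant along each row together with $\sum_j\by_{ij}=\sum_j\by^\star_{ij}=y_i$. No gaps; the remarks on the zero/support conventions are the right level of care.
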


\begin{proof}
The proof can be given by a direct computation that is almost verbatim the same as the proof of Lemma~3.3 in \cite{fs2021inverse}. Alternatively, the Pythagorean identity also follows from \cite[Theorem~3.2]{csiszarshields}, as the set $\bby$ is a linear set in the terminology of \cite{csiszarshields}.
\end{proof}

\begin{remark}\label{remark:ysym}
The optimal $\by^\star$ in Lemma~\ref{lemmay} exhibits the  symmetry $\by^\star_{ij}=\by^\star_{i,i-j}$ for all $i\geq j$, which is the same symmetry as enjoyed by the $\bw\in\bbw$.
\end{remark}

\begin{lemma}\label{lemmaw}
Consider Problem~\ref{problemw} for given $\by\in\bby$ and let for $j=0,\ldots,n$,
\[
\widetilde\by_j=\sum_{i=j}^n\by_{ij}+\sum_{i=j}^n\by_{i,i-j}\,.
\]
The minimizing $x_i^\star$ uniquely exist and are the solutions to the nonlinear system of equations
\begin{equation}\label{eq:xstar}
2x_j\sum_{i=0}^{n-j}x_i=\widetilde\by_j\,,\qquad j=0,\ldots,n.
\end{equation}
The Pythagorean identity holds for the solutions, i.e.\ for every $\bw\in\bbw$ it holds that
\begin{equation}\label{eq:pythwstar}
\ii(\by||\bw)=\ii(\by||\bw^\star)+\ii(\bw^\star||\bw)\,,
\end{equation}
where $\bw^\star$ is the element of $\bbw$ corresponding to the $x_i^\star$.
Furthermore, it holds that
\[
\sum_{i=0}^n(x^\star*x^\star)_i=\sum_{ij}\by_{ij}\,.
\]
\end{lemma}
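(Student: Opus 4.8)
The plan is to strip $\ii(\by||\bw)$ of the part that does not depend on $x$, so that Problem~\ref{problemw} becomes a scalar minimization whose stationarity conditions are exactly the system~\eqref{eq:xstar}. Establishing that this system has a (unique) solution is the technical core, and I would confine that to Appendix~\ref{app:equation}; the Pythagorean identity~\eqref{eq:pythwstar} would then promote such a solution to the unique global minimizer, and the remaining marginal identity would drop out by summing the equations.

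First I would fix $\by\in\bby$ and write, on the common support $\{0\le j\le i\le n\}$ of $\bby$ and $\bbw$,
\[
\ii(\by||\bw)=C(\by)-\sum_{i\ge j}\by_{ij}\log(x_{i-j}x_j)+\sum_{i\ge j}x_{i-j}x_j,
\]
where $C(\by)=\sum_{i\ge j}(\by_{ij}\log\by_{ij}-\by_{ij})$ is $x$-free. Minimizing over $\bw\in\bbw$ is thus minimizing $g(x):=-\sum_{i\ge j}\by_{ij}\log(x_{i-j}x_j)+\sum_{i\ge j}x_{i-j}x_j$ over $x\in\rr^{n+1}_+$. Collecting, for each $k$, the contributions to $\log x_k$ from the column index $j=k$ and from the offset $i-j=k$ produces exactly the weight $\widetilde\by_k=\sum_{i=k}^n\by_{ik}+\sum_{i=k}^n\by_{i,i-k}$, while $\sum_{i\ge j}x_{i-j}x_j=\sum_{i=0}^n(x*x)_i$; hence $g(x)=-\sum_{k=0}^n\widetilde\by_k\log x_k+\sum_{i=0}^n(x*x)_i$. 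Since $\sum_{i=0}^n(x*x)_i=\sum_{a+b\le n}x_ax_b$, one has $\frac{\partial}{\partial x_j}\sum_{i=0}^n(x*x)_i=2\sum_{i=0}^{n-j}x_i$, so $\frac{\partial g}{\partial x_j}=-\widetilde\by_j/x_j+2\sum_{i=0}^{n-j}x_i$; setting this to zero is precisely~\eqref{eq:xstar}, and at a coordinate $x_j=0$ the Kuhn-Tucker inequality on right-derivatives forces $\widetilde\by_j=0$, so~\eqref{eq:xstar} holds there too. Thus every minimizer of $g$ (equivalently, of $\ii(\by||\bw)$ over $\bbw$) solves~\eqref{eq:xstar}.

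Next I would prove~\eqref{eq:pythwstar} directly, for any solution $x^\star$ of~\eqref{eq:xstar}, $\bw^\star_{ij}=x^\star_{i-j}x^\star_j$, and arbitrary $\bw\in\bbw$: expanding the three divergences termwise collapses the combination to
\[
\ii(\by||\bw)-\ii(\by||\bw^\star)-\ii(\bw^\star||\bw)=\sum_{i\ge j}(\by_{ij}-\bw^\star_{ij})\log\frac{\bw^\star_{ij}}{\bw_{ij}}.
\]
Writing $\log(\bw^\star_{ij}/\bw_{ij})=\phi_{i-j}+\phi_j$ with $\phi_k:=\log(x^\star_k/x_k)$ and grouping by $\phi_k$, the coefficient of $\phi_k$ equals $\widetilde\by_k-\big(\sum_{i=k}^n\bw^\star_{ik}+\sum_{i=k}^n\bw^\star_{i,i-k}\big)=\widetilde\by_k-2x^\star_k\sum_{i=0}^{n-k}x^\star_i$, which vanishes by~\eqref{eq:xstar}; hence the right-hand side is $0$ and~\eqref{eq:pythwstar} follows. (If $\bw^\star_{ij}=0$, say $x^\star_j=0$, then~\eqref{eq:xstar} forces $\widetilde\by_j=0$, hence $\by_{ij}=0$, so all three divergences are finite and the manipulation is legitimate.) Since $\ii(\bw^\star||\bw)\ge0$ with equality iff $\bw=\bw^\star$, identity~\eqref{eq:pythwstar} shows that $\bw^\star$ is the \emph{unique} minimizer of $\ii(\by||\bw)$ over $\bbw$; in particular any two solutions of~\eqref{eq:xstar} yield the same $\bw^\star$, and as $\widetilde\by_0\ge 2y_0>0$ forces $x^\star_0>0$, the relations $(x^\star_0)^2=\bw^\star_{00}$ and $x^\star_0x^\star_k=\bw^\star_{k0}$ recover $x^\star$ uniquely, so the solution of~\eqref{eq:xstar} is unique. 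Finally, summing~\eqref{eq:xstar} over $j=0,\dots,n$ gives $2\sum_{i+j\le n}x^\star_ix^\star_j=2\sum_{i=0}^n(x^\star*x^\star)_i$ on the left; on the right, each of the two sums defining $\widetilde\by_j$, once summed over $j$, reindexes to $\sum_{ij}\by_{ij}$, giving $2\sum_{ij}\by_{ij}$. Dividing by $2$ gives the asserted marginal identity.

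The point left open above is that~\eqref{eq:xstar} actually \emph{has} a solution, and this is the main obstacle; I would defer it to Appendix~\ref{app:equation}. The difficulty, absent in the half-support setting of \cite{fs2021inverse}, is that the equations for indices $j$ and $n-j$ are coupled through the partial sums $\sum_{i=0}^{n-j}x_i$, so there is no termwise recursion; I expect to need a constructive ordering (or a monotone fixed-point) argument, together with the hypothesis $y_0>0$ — which yields $\widetilde\by_0\ge 2y_0>0$ — to rule out the degeneracies of Example~\ref{example:n=1}(i)--(ii), where the analogue of~\eqref{eq:xstar} is inconsistent or underdetermined.
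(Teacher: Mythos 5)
Your reduction of $\ii(\by||\bw)$ to $g(x)=-\sum_k\widetilde\by_k\log x_k+\sum_{i}(x*x)_i$, the stationarity computation yielding \eqref{eq:xstar}, the termwise verification of the Pythagorean identity, and the derivation of the marginal identity by summing the equations all match the paper's proof essentially step for step. Your uniqueness argument is a pleasant variant: the paper obtains uniqueness from the explicit recursive formulas of Propositions~\ref{prop:neven} and~\ref{prop:nodd}, whereas you deduce it from the Pythagorean identity itself (any solution of \eqref{eq:xstar} is the unique minimizer $\bw^\star$, and $x^\star$ is recovered from $\bw^\star$ once $x^\star_0>0$); note this leans on the standing assumption $y_0>0$, which the lemma does not restate.

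The genuine gap is existence, and it cannot simply be outsourced to Appendix~\ref{app:equation} as you propose. Propositions~\ref{prop:neven} and~\ref{prop:nodd} deliver a nonnegative solution of \eqref{eq:xj} only under hypothesis \eqref{eq:s2}, i.e.\ $S^2=\sum_{j\le n/2}r_j-\sum_{j>n/2}r_j\ge 0$; for arbitrary positive $r_j$ this can fail, and then the system has no nonnegative solution at all, since Lemma~\ref{lemma:R} forces $\big(\sum_{j\le n/2}x_j\big)^2=S^2$ for any solution. The substantive content of the paper's proof of Lemma~\ref{lemmaw} --- and precisely the part your proposal omits --- is the verification that \eqref{eq:s2} holds for the specific $r_j=\half\widetilde\by_j$ induced by a matrix $\by\in\bby$. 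This exploits the lower-triangular support of $\by$: with $k=\lfloor n/2\rfloor$ one reindexes $\sum_{j=k+1}^n\sum_{i=j}^n\by_{i,i-j}=\sum_{l=0}^{k}\sum_{i=l+k+1}^n\by_{il}$ (with separate bookkeeping for $n$ even and odd) to show that $\sum_{j\le k}\widetilde\by_j-\sum_{j>k}\widetilde\by_j$ collapses to a sum of entries $\by_{ij}$ over a triangular index set, hence is nonnegative. Whatever route you take to existence --- the paper's explicit construction or your suggested monotone fixed-point argument --- it must invoke this structural feature of $\bby$; without it the claim that the minimizing $x^\star_i$ exist is not established.
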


\begin{proof}
We compute
\[
\ii(\by||\bw)=\sum_{ij}\by_{ij}\log\frac{\by_{ij}}{\bw_{ij}}-\sum_{ij}\by_{ij}+\sum_{ij}\bw_{ij}\,.
\]
It follows that we have to minimize
\begin{align*}
F(\bw) & = -\sum_{ij}\by_{ij}\log\bw_{ij}+\sum_{ij}\bw_{ij} \\
& = -\sum_{i=0}^n\sum_{j=0}^i\by_{ij}\log (x_jx_{i-j})+\sum_{i=0}^n\sum_{j=0}^ix_jx_{i-j} \\
& = -\sum_{i=0}^n\sum_{j=0}^i\by_{ij}\log x_j-\sum_{i=0}^n\sum_{j=0}^i\by_{i,i-j}\log x_j+\sum_{i=0}^n\sum_{j=0}^ix_jx_{i-j} \\
& = -\sum_{j=0}^n\Big(\sum_{i=j}^n\by_{ij}\Big)\log x_j-\sum_{j=0}^n\Big(\sum_{i=j}^n\by_{i,i-j}\Big)\log x_j+\sum_{i=0}^n\sum_{j=0}^ix_jx_{i-j} \\
& = -\sum_{j=0}^n\widetilde\by_j\log x_j+\sum_{i=0}^n\sum_{j=0}^ix_jx_{i-j}\,,
\end{align*}
We compute the gradient of $F$,
\[
\frac{\partial F}{\partial x_j}= -\frac{\widetilde \by_j}{x_j}+2\sum_{i=0}^{n-j}x_i\,.
\]
Setting $\frac{\partial F}{\partial x_j}=0$ for all $j$ gives equations \eqref{eq:xstar}.

Next we show that this system of equations has a unique solution. Let
\begin{equation}
r_j := \half\widetilde\by_j\,, \label{eq:ry}
\end{equation}
and $k=\lfloor \frac{n}{2}\rfloor$.
Observe that the $r_j$ in \eqref{eq:ry} are nonnegative by the assumption on the $\by_{ij}$ in $\bby$.
In the new notation with the $r_j$ we have to solve
\begin{equation}\label{eq:xstarr}
x_j\sum_{i=0}^{n-j}x_i=r_j\,,\qquad j=0,\ldots,n.
\end{equation}
To show that a solution to \eqref{eq:xstarr} exists, we have to verify Condition~\eqref{eq:s2} of Lemma~\ref{lemma:R} for the $r_j$ given by \eqref{eq:ry}.
The result then follows by application of Propositions~\ref{prop:neven} or~\ref{prop:nodd}. We write
\[
S^2=\Bigg(\sum_{j=0}^k\sum_{i=j}^n\by_{ij} - \sum_{j=k+1}^n\sum_{i=j}^n\by_{i,i-j}\Bigg) + \Bigg(\sum_{j=0}^k\sum_{i=j}^n\by_{i,i-j} - \sum_{j=k+1}^n\sum_{i=j}^n\by_{ij}\Bigg).
\]
Consider the first term in parentheses in $S^2$,
\begin{equation}\label{eq:first}
\sum_{j=0}^k\sum_{i=j}^n\by_{ij} - \sum_{j=k+1}^n\sum_{i=j}^n\by_{i,i-j}\,,
\end{equation}
which we rewrite by splitting the first sum as
\begin{equation}\label{eq:split1}
\sum_{j=0}^k\sum_{i=j}^{j+k}\by_{ij} + \sum_{j=0}^k\sum_{i=j+k+1}^n\by_{ij} - \sum_{j=k+1}^n\sum_{i=j}^n\by_{i,i-j}\,.
\end{equation}
We shall now work on the last two sums in \eqref{eq:split1}. We develop, first by interchanging the summation order and then using a change of variable for the second summation,
\begin{align*}
\sum_{j=0}^k\sum_{i=j+k+1}^n\by_{ij} - \sum_{j=k+1}^n\sum_{i=j}^n\by_{i,i-j}
& = \sum_{i=k+1}^n\sum_{j=0}^{i-(k+1)}\by_{ij} - \sum_{i=k+1}^n\sum_{j=k+1}^i\by_{i,i-j} \\
& = \sum_{i=k+1}^n\sum_{j=0}^{i-(k+1)}\by_{ij} - \sum_{i=k+1}^n\sum_{\ell=0}^{i-(k+1)}\by_{i,\ell} \\
& = 0.
\end{align*}
It follows that the quantity in \eqref{eq:first}, the first term in parentheses in $S^2$,  is nonnegative. The proof for the second term in parentheses in  $S^2$ is similar.

To prove the Pythagorean identity \eqref{eq:pythwstar}, we compute both the quantities $\ii(\by||\bw)-\ii(\by||\bw^\star)$ and $\ii(\bw^\star||\bw)$ and show that they are the same. We put $\widetilde\ii(\by||\bw)=\sum_i\sum_{j\leq i}\by_{ij}\log\frac{\by_{ij}}{\bw_{ij}}$ and a similar convention for the other modified divergences. It is then sufficient to show that $\widetilde\ii(\by||\bw)-\widetilde\ii(\by||\bw^\star)$ and $\widetilde\ii(\bw^\star||\bw)$ are the same. Recall $\bw_{ij}=x_jx_{i-j}$ for $i\geq j$ and zero otherwise. For the elements $\bw^\star_{ij}$ one has similar expressions. A direct computation now gives
\begin{align*}
\widetilde\ii(\by||\bw)-\widetilde\ii(\by||\bw^\star)
& = \sum_i\sum_{j\leq i} \by_{ij}\log\frac{x_j^\star x^\star_{i-j}}{x_jx_{i-j}} \\
& = \sum_i\sum_{j\leq i} \by_{ij}\log\frac{x_j^\star}{x_j}  + \sum_i\sum_{j\leq i} \by_{ij}\log\frac{x^\star_{i-j}}{x_{i-j}} \\
& = \sum_i\sum_{j\leq i} \by_{ij}\log\frac{x_j^\star}{x_j}  + \sum_i\sum_{l\leq i} \by_{i,i-l}\log\frac{x^\star_{l}}{x_{l}} \\
& = \sum_j\sum_{i\geq j}\by_{ij}\log\frac{x_j^\star}{x_j} + \sum_l\sum_{i\geq l} \by_{i,i-l}\log\frac{x^\star_{l}}{x_{l}} \\
& = \sum_j\Big(\sum_{i\geq j}\by_{ij} + \sum_{i\geq j} \by_{i,i-j}\Big)\log\frac{x_j^\star}{x_j} \\
& = \sum_j\widetilde\by_j\log\frac{x_j^\star}{x_j}\,.
\end{align*}
Similar computations lead to
\[
\widetilde\ii(\bw^\star||\bw) = \sum_j\Big(2x_j^\star\sum_{i=j}^{n}x^\star_{i-l}\Big)\log\frac{x_j^\star}{x_j}\,.
\]
Since the $x_j^\star$ solve the system of equations~\eqref{eq:xstar}, the term in parentheses equals $\widetilde\by_j$ and we are done with proving the Pythagorean identity.
As the optimal $x^\star$ satisfies \eqref{eq:xstar}, we can sum the equalities over all $j$ to get by swapping the summation order $2\sum_{i=0}^n(x^\star*x^\star)=\sum_{ij}\tilde\by_{ij}$. But $\sum_{ij}\tilde\by_{ij}$ just equals $2\sum_{ij}\by_{ij}$ by the special structure of the matrix $\by$. The last claim follows.
\end{proof}

\begin{remark}\label{remark:symm2}
Under the symmetry condition on $\by$ as in Remark~\ref{remark:ysym}, Equation~\eqref{eq:xstar} reduces to
\[
x_j\sum_{i=0}^{n-j}x_i=\sum_{i=j}^n\by_{ij}
=\sum_{i=j}^n\by_{i,i-j}\,,\qquad j=0,\ldots,n.
\]
This means that, for every $j$, the sum of the $j$-th column of the matrix $\bw$ equals the sum of the $j$-th column of the matrix $\by$ and, by symmetry, that the sum of the corresponding subdiagonal of the matrix $\bw$ equals that of the sum of  its counterpart of the matrix $\by$.

\end{remark}

\section{The algorithm}\label{section:algorithm}

This section contains the key results of the paper. We derive an algorithm  producing iterates $x^t\in\rr^{n+1}_+$, $t\geq 0$, aiming at approximating a solution to Problem~\ref{problem}. The algorithm is of alternating minimization type, using the partial minimization problems of Section~\ref{section:lift}. This type of algorithms originates with the seminal paper \cite{ct1984} by Csisz\'ar and Tusn\'ady.
Results of Appendix~\ref{app:equation} are used throughout this section.

Start with a vector $x^0=(x^0_0,\ldots,x^0_n)$  with positive elements. An alternating concatenation of the solutions to Problems~\ref{problemy} and~\ref{problemw} yields the following. All relevant quantities of the algorithm have an index $t$ at the $t$-th step. Suppose the $t$-iterate $x^t$ is given, and then also $\bw^t$. According to Lemma~\ref{lemmay}, with $\bw^t$ as input, and recalling (\ref{eq:yhat}), (\ref{eq:rho}),
the optimal $\by^t$ is given by
\[
\by^t_{ij}=\frac{x^t_jx^t_{i-j}}{(x^t*x^t)_i}y_i=x^t_jx^t_{i-j}\rho^t_i\,.
\]
In the next step, one uses the $\by^t$ as input to compute according to Lemma~\ref{lemmaw} the optimal $x^{t+1}$ and $\bw^{t+1}$. So, one has to solve the analog of \eqref{eq:xstar}, i.e.\ finding the $x^{t+1}_j$ as the solutions to
\[
2x^{t+1}_j\sum_{i=0}^{n-j}x^{t+1}_i=\widetilde\by^t_j\,,\quad j=0,\ldots, n,
\]
or, in view of Remark~\ref{remark:symm2}, the solutions to
\begin{equation}\label{eq:xt}
x^{t+1}_j\sum_{i=0}^{n-j}x^{t+1}_i=r^t_j:=\sum_{i=j}^n\by^t_{ij}\,,\quad j=0,\ldots, n.
\end{equation}
The $x^{t+1}_j$  can be taken as the $x^\star_j$ in Proposition~\ref{prop:neven} for even $n$, or Proposition~\ref{prop:nodd} for odd $n$.  Although explicit expressions are complicated, the above procedure is amenable to a practically implementable algorithm.
The following algorithm  is the result of this procedure.

\begin{algorithm}\label{algo}
The data are given as $(y_0,\ldots,y_n)$.
Initiate the algorithm at a point $x^0\in (0,\infty)^{n+1}$ and let
$x^t=(x^t_0,\ldots,x^t_n)$ be the values at the $t$-th iteration of the algorithm. To compute the values $x^{t+1}_i$ at the $t+1$-th iteration:
\begin{enumerate}
\item
Compute the $\widehat y^t_j$ and $\rho^t_j$ as (see (\ref{eq:yhat}), (\ref{eq:rho}))
\[
\widehat y^t_j=(x^t*x^t)_j\,,\qquad \rho^t_j=\frac{y_j}{\widehat y^t_j}\,.
\]
\item
Compute $r^t_j$ from the $x^t_j$ according to
\[
r^t_j=x^t_j\sum_{i= j}^n x^t_{i-j}\rho^t_i=x^t_j\sum_{i=0}^{n-j}x^t_{i}\rho^t_{i+j}\,.
\]
\item
Compute the $B^t_j$ and $E^t_j$ from the $r^t_j$ according to \eqref{eq:bj} and \eqref{eq:ej}.
\item
Use Proposition~\ref{prop:neven} ($n$ even) or Proposition~\ref{prop:nodd} ($n$ odd) to compute the $x^{t+1}_m$ as the $x^\star_m$ in those propositions from the $B^t_j$ and $E^t_j$.
\end{enumerate}
\end{algorithm}
\noindent
Here we present an alternative description of the algorithm,  only for the case where $n$ is even, $n=2k$ to get more insight in the way Algorithm~\ref{algo} is structured. This also forms the basis of a practical implementation. Suppose all $x^t_j$ and related quantities are given at the $t$-th iteration, e.g.\ $S^t=\sum_{j=0}^kx^t_j$, as well as $x^{t+1}_{k-l},\ldots,x^{t+1}_{k+l}$ for some $l\in\{0,\ldots,k-1\}$. Then one computes $x^{t+1}_{k+l+1}$ recursively (in $l$) as the solution to
\[
x^{t+1}_{k+l+1}\Big(S^t-\sum_{i=k-l}^{k}x^{t+1}_i\Big)=r^t_{k+l+1}\,,
\]
and using this value, one computes $x^{t+1}_{k-l-1}$ from
\[
x^{t+1}_{k-l-1}\Big(S^t+\sum_{i=k+1}^{k+l+1}x^{t+1}_i\Big)=r^t_{k-l-1}\,.
\]
Hence starting from the previous iterate $x^t$ and the `middle' value $x^{t+1}_k$ (the computation of which follows from Proposition~\ref{prop:neven}), the above two equations produce the other $x^{t+1}_j$.
\medskip\\
We proceed with some properties of Algorithm~\ref{algo}. The portmanteau Proposition~\ref{prop:properties} below summarizes some useful properties of the algorithm, in particular it quantifies the update gain at each iteration step.

\begin{proposition}\label{prop:properties}
The iterates $x^t,\, t\ge 0$, of Algorithm~\ref{algo} satisfy the following properties.
\begin{enumerate}[itemsep=-.1em,topsep=-.2em,label=(\roman{*}),labelsep=0.9em]
\item\label{item:hpositive}
If $x^0>0$ componentwise, then $x^t>0$ componentwise, for all $t>0$.
\item\label{item:simplex} The iterates
$x^t$ belong  to the set $\mathcal{S}=\{x\in\rr^{n+1}_+: \sum_{i=0}^n (x*x)_i=\sum_{i=0}^n y_i\}$ for all $t>0$, in agreement with Proposition~\ref{prop:sum}. Additionally, the iterates $x^t$ also satisfy $\sum_{j=0}^nx^t_j\nabla_j\ii(x^t)=0$ for $t\geq 1$.
\item\label{item:Wt+1}
$\ii(y||x^t* x^t)$ decreases at each iteration, in fact one has
\begin{equation}\label{eq:gain1}
\ii(y||x^t* x^t) - \ii(y||x^{t+1}* x^{t+1}) = \ii(\by^t||\by^{t+1}) + \ii(\bw^{t+1}||\bw^t)\ge 0\,,
\end{equation}
where one can use the expression
\begin{align*}
\ii(\bw^{t+1}||\bw^t) & =2\sum_jr^t_j\log \frac{x^{t+1}_j}{x^t_j}\,.
\end{align*}
As a corollary,  $\ii(\bw^{t+1}||\bw^t)$ vanishes asymptotically.
\item
If $y=x^t* x^t$ then $x^{t+1}=x^t$, i.e.\ perfect matches are fixed points of the algorithm.
\item
The update relation  can be written in a formula --albeit an implicit expression-- in terms of the gradient, \begin{equation}\label{eq:xnabla}
x^{t+1}_j\sum_{i=0}^{n-j}x^{t+1}_i=x^t_j\left(-\half\nabla_j\ii(x^t)+\sum_{i=0}^{n-j}x^t_{i}\right)\,,\quad j=0,\ldots, n.
\end{equation}
\item
If $\nabla \ii(x^t)=0$ then $x^{t+1}=x^t$, i.e.\ stationary points of $\ii(x)$ are fixed points of the algorithm. Additionally, if $x^t_j=0$, then also $x^{t+1}_j=0$.
\end{enumerate}
\end{proposition}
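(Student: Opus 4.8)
The plan is to establish the six items roughly in the listed order, reducing each to the closed forms of the two partial minimizations (Lemmas~\ref{lemmay} and \ref{lemmaw}) and to the gradient identities \eqref{eq:iprime}--\eqref{eq:nablarho}. I would first dispose of the purely algebraic item~(v): the defining relation \eqref{eq:xt} reads $x^{t+1}_j\sum_{i=0}^{n-j}x^{t+1}_i=r^t_j$ with $r^t_j=x^t_j\sum_{i=j}^n\rho^t_i x^t_{i-j}$, and substituting \eqref{eq:nablarho} taken at $k=j$, $x=x^t$, namely $\sum_{i=j}^n\rho^t_i x^t_{i-j}=-\half\nabla_j\ii(x^t)+\sum_{i=0}^{n-j}x^t_i$, turns it into \eqref{eq:xnabla}. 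Item~(i) I would prove by induction on $t$: if $x^t>0$ componentwise then all $(x^t*x^t)_i>0$, so the $\rho^t_i$ and the matrix $\by^t$ are well defined and the right-hand sides $r^t_j$ of \eqref{eq:xt} are strictly positive (this is where strict positivity of the data $y_k$ enters; in any case $r^t_0\ge y_0>0$); since by Lemma~\ref{lemmaw} the system \eqref{eq:xt} has a unique solution, whose explicit representation in Proposition~\ref{prop:neven} (for $n$ even) or Proposition~\ref{prop:nodd} (for $n$ odd) has strictly positive entries when the $r^t_j$ are strictly positive, we obtain $x^{t+1}>0$.

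For item~(ii), the inclusion $x^t\in\mathcal S$ for $t\ge1$ follows from the last assertion of Lemma~\ref{lemmaw} applied with $\by=\by^t$: $\sum_i(x^{t+1}*x^{t+1})_i=\sum_{ij}\by^t_{ij}=\sum_i y_i$, the last step because $\by^t\in\bby$ has row sums $y_i$. The extra identity $\sum_j x^t_j\nabla_j\ii(x^t)=0$ is then immediate from the computation in the proof of Proposition~\ref{prop:sum}, which for an arbitrary $x$ gives $\sum_j x_j\nabla_j\ii(x)=2\big(\sum_i(x*x)_i-\sum_i y_i\big)$, vanishing at $x=x^t$ for $t\ge1$ by the inclusion just shown. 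Item~(iv) is quick: $y=x^t*x^t$ forces $\rho^t_i\equiv1$, hence $\by^t=\bw^t$ and $r^t_j=x^t_j\sum_{i=0}^{n-j}x^t_i$, so $x^t$ itself solves \eqref{eq:xt} and by uniqueness $x^{t+1}=x^t$. For item~(vi): if $\nabla\ii(x^t)=0$ then \eqref{eq:xnabla} turns \eqref{eq:xt} into $x^{t+1}_j\sum_{i=0}^{n-j}x^{t+1}_i=x^t_j\sum_{i=0}^{n-j}x^t_i$, again solved by $x^t$, so $x^{t+1}=x^t$ by uniqueness; and if $x^t_j=0$ then $r^t_j=x^t_j\sum_{i=j}^n\rho^t_i x^t_{i-j}=0$, so, as $x^{t+1}_0>0$ (from the $j=0$ equation, whose right-hand side $r^t_0\ge y_0>0$), the $j$-th equation of \eqref{eq:xt} forces $x^{t+1}_j=0$.

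Item~(iii) is the core, and I would prove it by the standard alternating-minimization telescoping. By Lemma~\ref{lemmay}, $\ii(y||x^t*x^t)=\ii(\by^t||\bw^t)$; by Lemma~\ref{lemmaw}, $\bw^{t+1}$ is the minimizer of $\ii(\by^t||\cdot)$ and $\ii(\by^t||\bw^t)=\ii(\by^t||\bw^{t+1})+\ii(\bw^{t+1}||\bw^t)$; by Lemma~\ref{lemmay} again, $\by^{t+1}$ is the minimizer of $\ii(\cdot||\bw^{t+1})$, with $\ii(\by^t||\bw^{t+1})=\ii(\by^t||\by^{t+1})+\ii(\by^{t+1}||\bw^{t+1})$ and $\ii(\by^{t+1}||\bw^{t+1})=\ii(y||x^{t+1}*x^{t+1})$. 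Concatenating these four relations gives \eqref{eq:gain1}, and nonnegativity of the three divergences on its right yields the monotone decrease. For the displayed form of $\ii(\bw^{t+1}||\bw^t)$ I would expand the definition, split $\log\frac{\bw^{t+1}_{ij}}{\bw^t_{ij}}=\log\frac{x^{t+1}_j}{x^t_j}+\log\frac{x^{t+1}_{i-j}}{x^t_{i-j}}$, and re-index the second summand by $l=i-j$; by \eqref{eq:xt} both pieces collapse to $\sum_j r^t_j\log\frac{x^{t+1}_j}{x^t_j}$, so their sum is $2\sum_j r^t_j\log\frac{x^{t+1}_j}{x^t_j}$, while the leftover $\sum_{ij}\bw^t_{ij}-\sum_{ij}\bw^{t+1}_{ij}=\sum_i(x^t*x^t)_i-\sum_i y_i$ vanishes by item~(ii) for $t\ge1$. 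Finally, $\ii(y||x^t*x^t)$ is nonincreasing and bounded below by $0$, hence converges, so the increments in \eqref{eq:gain1} tend to $0$; since both summands on its right are nonnegative, each tends to $0$, in particular $\ii(\bw^{t+1}||\bw^t)\to0$.

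The hard part, such as it is, is twofold. The re-indexing bookkeeping in item~(iii)---extracting the clean closed form of $\ii(\bw^{t+1}||\bw^t)$ and verifying the cancellation of the sum-of-entries terms---must be done carefully, although it is routine. The genuinely delicate point is item~(i): the construction of the solution of the nonlinear system \eqref{eq:xt} is relegated to the appendix (Propositions~\ref{prop:neven} and \ref{prop:nodd}), so one has to read off from there that this solution is strictly positive whenever the $r^t_j$ are, and one has to guarantee that the $r^t_j$ are strictly positive in the first place, which is exactly where strict positivity of the data $y$ is used---without it the iterates may reach the boundary of $\rr^{n+1}_+$ (cf.\ Example~\ref{example:n=1}).
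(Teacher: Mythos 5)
Your proposal is correct and follows essentially the same route as the paper: item (v) via \eqref{eq:nablarho}, item (ii) by summing the update equations, item (iii) by the two Pythagorean identities of Lemmas~\ref{lemmay} and~\ref{lemmaw} plus the re-indexing computation for $\ii(\bw^{t+1}||\bw^t)$, and items (iv) and (vi) from the uniqueness of the solution of \eqref{eq:xt}. You are in fact somewhat more explicit than the paper in two places that it glosses over — the telescoping derivation of \eqref{eq:gain1} itself, and the role of strict positivity of the data in item (i), which the paper dismisses as ``obvious'' — and these are accurate, not deviations.
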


\begin{proof}
(i) is obvious.

(ii) Recall that $x^{t+1}$ is for $t\geq 0$ the solution to \eqref{eq:xt}. Summing over $j$ one obtains $\sum_{j=0}^nx^{t+1}_j\sum_{i=0}^{n-j}x^{t+1}_i=\sum_{i=0}^n y_i$ as the row sums of $\by^t$ are the $y_i$. By swapping the summation order, one sees $\sum_{j=0}^nx^{t+1}_j\sum_{i=0}^{n-j}x^{t+1}_i=\sum_{i=0}^n(x^{t+1}*x^{t+1})_i$. The first result follows. Using this in \eqref{eq:xnabla} one obtains $\sum_{j=0}^nx^t_j\nabla_j\ii(x^t)=0$ for $t\geq 1$, as desired.

(iii) Recall that $x^{t+1}_j\sum_{i=j}^n x^{t+1}_{i-j}=r^t_j$ for all $j$, which follows from Equation~\eqref{eq:xstarr} with $r=r^t$ and $x=x^{t+1}$.  Property~(ii), the iterates $x^t$ belong to the set $\mathcal{S}$, allows us to write, also using $r^t_j$ as in \eqref{eq:xt},
\begin{align*}
\ii(\bw^{t+1}||\bw^t) & = \sum_{i=0}^n\sum_{j=0}^i x^{t+1}_jx^{t+1}_{i-j}\log \frac{x^{t+1}_jx^{t+1}_{i-j}}{x^{t}_jx^{t}_{i-j}} \\
& = \sum_{i=0}^n\sum_{j=0}^i x^{t+1}_jx^{t+1}_{i-j}(\log \frac{x^{t+1}_j}{x^{t}_j}+\log\frac{x^{t+1}_{i-j}}{x^{t}_{i-j}}) \\
& = \sum_{j=0}^nx^{t+1}_j\sum_{i=j}^n x^{t+1}_{i-j}\log \frac{x^{t+1}_j}{x^{t}_j}+\sum_{i=0}^n\sum_{l=0}^i x^{t+1}_{i-l} x^{t+1}_{l}\log\frac{x^{t+1}_{l}}{x^{t}_{l}} \\
& = \sum_{j=0}^n r^t_j\log \frac{x^{t+1}_j}{x^{t}_j}+\sum_{l=0}^nx^{t+1}_{l}\sum_{i=l}^n x^{t+1}_{i-l} \log\frac{x^{t+1}_{l}}{x^{t}_{l}} \\
& = \sum_{j=0}^n r^t_j\log \frac{x^{t+1}_j}{x^{t}_j}+\sum_{l=0}^nr^t_l \log\frac{x^{t+1}_{l}}{x^{t}_{l}} \\
& = 2\sum_{j=0}^n r^t_j\log \frac{x^{t+1}_j}{x^{t}_j}\,.
\end{align*}
It follows from \eqref{eq:gain1} that the sum $\sum_{t=0}\ii(y||x^t* x^t)-\ii(y||x^{t+1}* x^{t+1})$ is finite and so is then $\sum_{t=0}^\infty \ii(\bw^{t+1}||\bw^t)$, which implies $\ii(\bw^{t+1}||\bw^t)\to 0$ for $t\to\infty$.

(iv) In this case one has $\rho_j=1$ for all $j$. Hence, \eqref{eq:xt} reads
\[
x^{t+1}_j\sum_{i=0}^{n-j}x^{t+1}_i=\sum_{i=j}^nx_j^tx^t_{i-j}\,,\quad j=0,\ldots, n.
\]
As this system of equations has a unique solution in terms of the variables $x^{t+1}_j$ by virtue of Propositions~\ref{prop:neven} and \ref{prop:nodd}, the solution is necessarily $x^{t+1}_j=x^{t}_j$ for all $j$.

(v) In view of \eqref{eq:nablarho} the sum $\sum_{i=j}^n\by^t_{ij}=x^t_j\sum_{i=j}^nx^t_{i-j}\rho_i^t$ in \eqref{eq:xt} can be written as $x^t_j\big(-\half\nabla_j\ii(x^t)+\sum_{i=j}^nx^t_{i-j}\big)$, hence the  $x^{t+1}_j$ also satisfy \eqref{eq:xnabla}.

(vi) If some $x^t_m=0$, then by the multiplicative nature of the algorithm (see \eqref{eq:xnabla}), also $x^{t+1}_m=0$, so in the further analysis we assume that all $x^t_m>0$.
By Propositions~\ref{prop:neven} and \ref{prop:nodd} the $x^{t+1}_j$ are unique solutions to the system of equations. If $\nabla \ii(x^t)=0$, then the equations to solve become
\[
x^{t+1}_j\sum_{i=0}^{n-j}x^{t+1}_i=x^t_j\sum_{i=0}^{n-j}x^t_{i}\,,\quad j=0,\ldots n,
\]
and therefore $x^{t+1}=x^t$.
\end{proof}
Knowing properties of Algorithm~\ref{algo} we now present a proof of Proposition~\ref{proposition:exist}.
\begin{proof}\!\!\!\textbf{ of Proposition~\ref{proposition:exist}.}
As $y_0>0$, one can only have finite divergence if $x_0>0$, because of the term $y_0\log\frac{y_0}{(x*x)_0}$ in the divergence. Furthermore, $\sum_{k=0}^n(x*x)_k\geq x_0\sum_{k=0}^nx_k$. Hence if some $x_k\to\infty$, the divergence tends to infinity as well. Therefore, the minimum has to be sought in a sufficiently large compact $K_0$ set of $\rr^{n+1}_+$. We use Proposition~\ref{prop:sum} and various results from Proposition~\ref{prop:properties}. Let $x=x^0$ be an arbitrary point in $\rr^{n+1}_+$. Then one step of Algorithm~\ref{algo} yields $x^1\in\rr^{n+1}_+$ satisfying  $\ii(x^1)\leq \ii(x)$. Hence we can limit $K_0$ to those $x$ which also satisfy $\sum_{j=0}^n(x*x)_j=\sum_{j=0}^ny_j$, which gives a compact set as well, call it $K_1$. Note that $\ii(x)=\sum_{k:y_k>0}(y_k\log\frac{y_k}{(x*x)_k}-y_k)+\sum_k(x*x)_k$. To find the minimum, we can restrict ourselves even further to those $x$ for which $(x*x)_k\geq \eps$ for all $k$ such that $y_k>0$, by choosing $\eps$ sufficiently small and positive. This implies that we restrict the finding of the minimizers to an even smaller compact set $K_2$ on which $\ii$ is continuous. This proves the existence of a minimizer.
\end{proof}
The Algorithm~\ref{algo} shares some properties with its counterpart in \cite{fs2021inverse}.  We mention only a few  related
to its large running time behavior. We follow ideas taken from  \cite{finessospreij2015ieeeit} and \cite{fs2021inverse}.
\begin{lemma}\label{lemma:limfix}
Let all data $y_i$ be positive.
Limit points of the sequence $(x^t)$ are fixed points of Algorithm~\ref{algo}. Moreover, if along a subsequence $(t_j)$ one has the convergence $x^{t_j}\to x^\infty$, then also $x^{t_j+1}\to x^\infty$.
\end{lemma}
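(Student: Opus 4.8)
The plan is to exploit the monotone decrease of the divergence established in Proposition~\ref{prop:properties}\ref{item:Wt+1} together with the compactness furnished by Proposition~\ref{prop:properties}\ref{item:simplex}. Since all $y_i>0$, the proof of Proposition~\ref{proposition:exist} shows that the iterates stay in a compact set $K_2\subset\rr^{n+1}_+$ on which every coordinate of $x*x$ that matters is bounded below, and on which $\ii$ is continuous; in particular $\ii(y||x^t*x^t)$ is nonincreasing and bounded below, hence convergent to some $\ii^\infty$. Also, because $y_0>0$ forces $x^t_0$ bounded away from $0$ along a convergent subsequence (otherwise the divergence would blow up, contradicting convergence of $\ii(y||x^t*x^t)$), any limit point $x^\infty$ has $x^\infty_0>0$, so $(x^\infty*x^\infty)_i>0$ for all $i$ and all the quantities $\rho_i$, $r_j$ attached to $x^\infty$ are well defined and finite.

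First I would fix a subsequence $x^{t_j}\to x^\infty$. The update map $x^t\mapsto x^{t+1}$ of Algorithm~\ref{algo} is, by the explicit recursions in Propositions~\ref{prop:neven} and~\ref{prop:nodd} (steps 1--4 of the algorithm), a continuous function of $x^t$ on the region where $x^t_0>0$ (the only place continuity could fail is where a denominator vanishes, and near $x^\infty$ all relevant denominators — partial sums of the strictly positive coordinates — are bounded below). Hence $x^{t_j+1}\to \Phi(x^\infty)$, where $\Phi$ denotes this continuous update map. It remains to identify $\Phi(x^\infty)$ with $x^\infty$. For this I would use the gain identity~\eqref{eq:gain1}: summing it over $t$ telescopes to a finite quantity, so $\ii(\bw^{t+1}||\bw^t)\to 0$, and in particular $\ii(\bw^{t_j+1}||\bw^{t_j})\to 0$. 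Now $\bw^{t_j}\to \bw^\infty$ and $\bw^{t_j+1}\to \bw^{\Phi(x^\infty)}$ entrywise, all entries being bounded and the limiting matrix $\bw^\infty$ having the same support as $\bw^{\Phi(x^\infty)}$ (determined by $x^\infty_0>0$); by lower semicontinuity of $\ii(\cdot||\cdot)$ — or simply continuity on this compact region where all entries are bounded and the positive ones bounded below — we get $\ii(\bw^{\Phi(x^\infty)}||\bw^\infty)=0$, hence $\bw^{\Phi(x^\infty)}=\bw^\infty$, i.e.\ $\Phi(x^\infty)=x^\infty$. Thus $x^\infty$ is a fixed point, and $x^{t_j+1}\to\Phi(x^\infty)=x^\infty$.

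The main obstacle I anticipate is the continuity of the update map $\Phi$ at $x^\infty$: the solution of the nonlinear system~\eqref{eq:xt} is given only implicitly through the somewhat intricate formulas of Propositions~\ref{prop:neven}/\ref{prop:nodd} (via the auxiliary quantities $B^t_j$, $E^t_j$ of~\eqref{eq:bj}--\eqref{eq:ej}), so one must check that along the way no denominator degenerates in the limit. This is where positivity of all $y_i$ is used decisively: it guarantees $x^\infty_0>0$, which in turn keeps every partial sum $\sum_{i}x^{t_j+1}_i$ appearing in the recursions bounded away from zero near the limit, so the implicit solution depends continuously on the data $r^t$, which themselves depend continuously on $x^t$ (step~2 of the algorithm) as long as $(x^t*x^t)_i>0$. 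An alternative that sidesteps a direct continuity argument is to pass to the limit in the characterizing equations~\eqref{eq:xt} directly: from $\ii(\bw^{t_j+1}||\bw^{t_j})=2\sum_j r^{t_j}_j\log(x^{t_j+1}_j/x^{t_j}_j)\to 0$ together with $x^{t_j}\to x^\infty$, deduce $x^{t_j+1}_j/x^{t_j}_j\to 1$ for each $j$ with $r^\infty_j>0$ (hence $x^{t_j+1}\to x^\infty$ on those coordinates), and handle the remaining coordinates using part~(vi) of Proposition~\ref{prop:properties} and the uniqueness in Lemma~\ref{lemmaw}; then the limiting relation $x^\infty_j\sum_{i=0}^{n-j}x^\infty_i=r^\infty_j$ says exactly that $x^\infty$ is the Lemma~\ref{lemmaw} solution for its own $\by^\infty$, i.e.\ a fixed point.
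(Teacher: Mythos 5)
Your proposal is correct and follows essentially the same route as the paper: both rely on $\ii(\bw^{t+1}||\bw^t)\to 0$ from Proposition~\ref{prop:properties}\ref{item:Wt+1}, continuity (of the update map and of the divergence) to conclude $\bw^\infty=\bw^{\infty+1}$, and then the positivity $x^\infty_0>0$ forced by $y_0>0$ to recover $x^\infty=x^{\infty+1}$ from the matrices via the entries $\bw_{0j}=x_0x_j$. If anything, you are more explicit than the paper about the continuity of the update map $\Phi$ near the limit point, which the paper leaves implicit in the phrase ``its next iterate.''
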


\begin{proof}
Let $\ii^t=\ii(y||x^t*x^t)$. Recall from Proposition~\ref{prop:properties}\ref{item:Wt+1} that the divergence  $\ii(\bw^t||\bw^{t+1})\to 0$. Let $x^\infty$ be a limit point of the algorithm and $x^{\infty+1}$ its next iterate, with corresponding $\bw^\infty$ and $\bw^{\infty+1}$, then by continuity of the divergence in both arguments $\ii(\bw^\infty||\bw^{\infty+1})=0$, so $\bw^\infty=\bw^{\infty+1}$. We have for the $00$-elements $\bw^\infty_{00}=\bw^{\infty+1}_{00}$, so $x^\infty_0=x^{\infty+1}_0\geq 0$.
As we have assumed $y_0>0$, we must have $x^\infty_0=x^{\infty+1}_0>0$, otherwise $\ii(y_0||(x^\infty*x^\infty)_0)=\ii(y_0||(x^\infty_0)^2)$ would be infinite. Because $\bw^\infty_{0j}=x^\infty_0x^\infty_j$ and $\bw^{\infty+1}_{0j}=x^{\infty+1}_0x^{\infty+1}_j$ are equal we can now conclude $x^\infty_j=x^{\infty+1}_j$ for $j>0$.

The second assertion follows from the just given proof.
\end{proof}
Numerical examples, see Section~\ref{section:numerics}, indicate that depending on the initial values of the algorithm convergence of the iterates takes place to a point that is a local minimizer of the objective function. Comparing the full support Problem~\ref{problem} to the half support problem discussed in \cite{fs2021inverse}, one observes that in the present case there are about twice as many parameters involved when the number of observations is kept the same. Therefore it is not surprising that multiple local minima exist and that convergence of the algorithm to a unique limit, that should also be a global minimizer is at least questionable.
The next theorem is therefore the best possible result on the convergence properties of the algorithm.

\begin{theorem}\label{thm:kt}
Limit points of the sequence $(x^t)$ are Kuhn-Tucker points of the minimization Problem~\ref{problem}.
\end{theorem}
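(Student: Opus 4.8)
The plan is to combine Lemma~\ref{lemma:limfix} with the fixed-point characterization of the update map, and then translate the fixed-point relation into the Kuhn-Tucker conditions for Problem~\ref{problem}. First I would fix a limit point $x^\infty$ along a subsequence $x^{t_j}\to x^\infty$; by Lemma~\ref{lemma:limfix} (assuming all $y_i>0$, as in that lemma) we have $x^{t_j+1}\to x^\infty$ as well, and $x^\infty$ is a fixed point of Algorithm~\ref{algo}. The key identity to pass to the limit is the implicit update formula \eqref{eq:xnabla}, namely $x^{t+1}_j\sum_{i=0}^{n-j}x^{t+1}_i = x^t_j\big(-\tfrac12\nabla_j\ii(x^t)+\sum_{i=0}^{n-j}x^t_i\big)$. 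Using continuity of $\nabla\ii$ on the relevant region (where $(x^\infty*x^\infty)_k>0$ for all $k$, which holds since all $y_k>0$ forces $\widehat y_k>0$ at a finite-divergence limit point), the limit $x^\infty$ satisfies
\[
x^\infty_j\sum_{i=0}^{n-j}x^\infty_i = x^\infty_j\Big(-\tfrac12\nabla_j\ii(x^\infty)+\sum_{i=0}^{n-j}x^\infty_i\Big),\qquad j=0,\ldots,n,
\]
which simplifies to $x^\infty_j\,\nabla_j\ii(x^\infty)=0$ for every $j$.

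Next I would extract the Kuhn-Tucker conditions from this complementary-slackness-type relation. For the inequality-constrained Problem~\ref{problem} (minimize $\ii(x)$ over $x\in\rr^{n+1}_+$), a point $x$ is a Kuhn-Tucker point iff $x\ge 0$, $\nabla_j\ii(x)\ge 0$ for all $j$, and $x_j\nabla_j\ii(x)=0$ for all $j$. The last of these we have just obtained. The coordinate $x^\infty\ge 0$ is immediate since all iterates are in $\rr^{n+1}_+$. So the remaining point is to show $\nabla_j\ii(x^\infty)\ge 0$ for every $j$. For indices $j$ with $x^\infty_j>0$ this is automatic from $x^\infty_j\nabla_j\ii(x^\infty)=0$. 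The work is therefore concentrated on indices $j$ with $x^\infty_j=0$: here one must rule out $\nabla_j\ii(x^\infty)<0$.

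The main obstacle is exactly this sign analysis at coordinates that vanish in the limit. I would argue it by a monotonicity/contradiction argument in the spirit of the analogous result in \cite{fs2021inverse}: suppose $x^\infty_{j_0}=0$ but $\nabla_{j_0}\ii(x^\infty)<0$ for some $j_0$. Intuitively, a strictly negative partial derivative in a direction that is currently zero means $\ii$ can be strictly decreased by increasing $x_{j_0}$, and one must show the algorithm's update, which multiplicatively pushes $x^{t}_{j_0}$ toward the value dictated by $r^t_{j_0}=x^t_{j_0}(-\tfrac12\nabla_{j_0}\ii(x^t)+\sum x^t_i)$, cannot asymptotically leave it at $0$ while the gradient stays negative — this contradicts either the convergence $x^{t_j}_{j_0}\to 0$ together with $x^{t_j+1}_{j_0}\to 0$, or the established fact that $\ii(\bw^{t+1}\|\bw^t)=2\sum_j r^t_j\log(x^{t+1}_j/x^t_j)\to 0$. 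Concretely, one examines \eqref{eq:xnabla} at $j=j_0$ along the subsequence: dividing by $x^{t_j}_{j_0}$ (positive for all finite $t$ by Proposition~\ref{prop:properties}\ref{item:hpositive}) gives $\sum_{i=0}^{n-j_0}x^{t_j+1}_i = -\tfrac12\nabla_{j_0}\ii(x^{t_j})+\sum_{i=0}^{n-j_0}x^{t_j}_i$ provided $x^{t_j+1}_{j_0}$ were comparable, and more carefully one tracks the ratio $x^{t_j+1}_{j_0}/x^{t_j}_{j_0}=\big(-\tfrac12\nabla_{j_0}\ii(x^{t_j})+\sum_{i=0}^{n-j_0}x^{t_j}_i\big)/\sum_{i=0}^{n-j_0}x^{t_j+1}_i$; if $\nabla_{j_0}\ii(x^\infty)<0$ this ratio is bounded below by a constant $>1$ along the subsequence, so $r^{t_j}_{j_0}\log(x^{t_j+1}_{j_0}/x^{t_j}_{j_0})$ stays bounded away from $0$ unless $r^{t_j}_{j_0}\to 0$; but $r^{t_j}_{j_0}=x^{t_j}_{j_0}\cdot(\text{something bounded away from }0)\to 0$ is consistent, so one must instead sum the contributions and use that the whole series $\sum_t\ii(\bw^{t+1}\|\bw^t)$ converges to derive that the persistent geometric growth of $x^t_{j_0}$ at rate $>1$ forces $x^t_{j_0}$ away from $0$, contradicting $x^{t_j}_{j_0}\to 0$. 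I would present this obstruction cleanly by mirroring the corresponding argument in the companion paper, invoking Proposition~\ref{prop:properties}\ref{item:Wt+1} and Lemma~\ref{lemma:limfix}, and noting that the only structural input needed beyond those is the explicit form \eqref{eq:xnabla} of the update, which holds verbatim here.
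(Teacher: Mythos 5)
Your proposal follows the paper's proof essentially verbatim: invoke the second assertion of Lemma~\ref{lemma:limfix}, pass to the limit in \eqref{eq:xnabla} to obtain $x^\infty_j\nabla_j\ii(x^\infty)=0$ for all $j$, and reduce the Kuhn--Tucker verification to showing $\nabla_j\ii(x^\infty)\ge 0$ at coordinates with $x^\infty_j=0$. The paper, like you, defers that last delicate step to the argument of Proposition~4.8 in \cite{fs2021inverse}; your in-line sketch of it (persistent geometric growth of $x^t_{j_0}$ at a rate bounded above $1$) is only established along the convergent subsequence and so does not yet yield a contradiction on its own, but this matches the level of detail the paper itself supplies.
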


\begin{proof}
The main idea here is to reason as in the proof of Proposition~4.8 in \cite{fs2021inverse}. First we invoke the second assertion of Lemma~\ref{lemma:limfix} to deduce by taking limits in \eqref{eq:xnabla} that
\[
x^{\infty}_j\sum_{i=0}^{n-j}x^{\infty}_i=x^\infty_j\Big(-\half\nabla_j\ii(x^\infty)+\sum_{i=0}^{n-j}x^\infty_{i}\Big)\,,\quad j=0,\ldots, n.
\]
Hence one has
\[
x^\infty_j\nabla_j\ii(x^\infty)=0\,, \quad j=0,\ldots, n.
\]
If $x^\infty_j>0$, then necessarily $\nabla_j\ii(x^\infty)=0$. If $x^\infty_j=0$, the analysis is more delicate to deduce that $\nabla_j\ii(x^\infty)\geq 0$ and analogous to the proof of Proposition~4.8 in \cite{fs2021inverse}. Further details are omitted.
\end{proof}

\section{Numerical examples}\label{section:numerics}

In this section we illustrate the behaviour of Algorithm~\ref{algo} reviewing the results of numerical experiments.  All figures are collected at the end of the paper.
In the first experiment the data sequence $y$, with $n=20$, was obtained as the autoconvolution $y=x*x$ of a randomly generated signal $x$, with components $x_j$ uniformly distributed in the (real) interval $[1, 10]$. We investigated whether in this "exact" case the algorithm is capable of retrieving the true sequence $x$ generating the data $y=x*x$. Figs.~\ref{fig:exact_1}, \ref{fig:exact_2}, and \ref{fig:exact_3} share the same data sequence $y$, obtained as specified above. The three figures show the results of three runs of Algorithm~\ref{algo}, with $T=2000$, started from three different (randomly generated) initial conditions $x^0$. In each figure the top graph shows, in distinct colors, the trajectories of the iterates of the components $x^t_j$, plotted against the iteration number  $t\in[1, T]$. The diamonds at the right end of the graph are the true $x_j$ values that generated the data $y$. The second graph shows the superimposed plots of the data generating signal $x$, and of the reconstructed signal $x^T$, at the last iteration, both plotted against their component number (in the figure labelled $j=1,2, \dots, n+1$ instead of $j=0,1, \dots, n$). The third graph shows the decreasing sequence $\ii(y||x^t* x^t)$.  The fourth and last graph shows the superimposed plots of the data vector $y$ and of the reconstructed convolution $x^T* x^T$, at the last iteration, both against the component number (labelled $j=1,\dots, n$). Fig.~\ref{fig:exact_1} shows a typical behavior, with the Algorithm producing limit points $x^\infty$ that coincide with the generating vector $x$. Another common behavior is shown in Fig.~\ref{fig:exact_2} which shows convergence (the iterates in the top graph stabilize for $t\rightarrow \infty$) but the limit point $x^\infty$ does not coincide with the generating $x$. An atypical but interesting behavior is shown in Fig.~\ref{fig:exact_3} where, after approximately 1600 iterations, the iterates $x^t$ suddenly switch basin of attraction as indicated by the sudden dip in the divergence $\ii(y||x^t* x^t)$.
\medskip\\
For the second experiment the data $y$ are not produced by autoconvolution but randomly generated. The elements of $y$ are defined as $y_k=(k+1)u_k$, for $k=0,\ldots, n$, where the $u_k$ are generated as IID random numbers from a uniform distribution on $[1,2 K^2]$, whose expectation is approximately $K^2$. Here $K$ is an integer, chosen with a convenient value. This procedure is motivated by comparison with the first experiment, the exact case in which $y=x*x$ with the $x_k$'s randomly generated as uniform on $[1,2K]$. In that case the expectation of $x_ix_j$ (for $i\neq j$) is roughly equal to $K^2$ and the autoconvolution $(x*x)_k$ will then have expectation approximately equal to $(k+1)K^2$. The choice to generate the $y_k$ as described above has been made to generate components $y_k$ which have an upward trend, as is the case when $y_k=(x*x)_k$. It would not make a lot of sense to model observations $y_k$ as the output of an autoconvolution system, if e.g.\ the components $y_k$ show a decreasing behavior. Figs.~\ref{fig:random_1} and \ref{fig:random_2} share the same data sequence $y$, generated as specified above with $n=12$ and $K=5$. The two figures show the results of two runs of Algorithm~\ref{algo}, with $T=2000$, started from two different (randomly generated) initial conditions. In each figure the top graph shows, in distinct colors, the trajectories of the iterates of the components $x^t_j$, plotted against the iteration number  $t\in[1, T]$. The second graph shows the decreasing sequence $\ii(y||x^t* x^t)$.  The third and last graph shows the superimposed plots of the data vector $y$ and of the fitted convolution $x^T* x^T$, at the last iteration, both against the component number (labelled $j=1,\dots, n$). Fig.~\ref{fig:random_1} shows a typical behavior of the algorithm. The iterates $x_k^t$ converge quickly to a limit point $x^\infty$, as shown by their stabilizing trajectories in the top graph and quick decrease in the divergence $\ii(y||x^t* x^t)$.
A different run on the same random data $y$, starting from different initial conditions $x^0$, produces Fig.~\ref{fig:random_2}. Also in this case the iterates $x_k^t$ (top graph) stabilize for $t\rightarrow \infty$, but at a much slower rate, as indicated by the divergence which only after about 1000 iterations dips to reach a limit value much higher than in the first run. The numerical behaviors, both in the exact and in the random data cases confirm the strong dependence of the limit points of the algorithm from the initial conditions, as is expected from this class of algorithms.

\section{Conclusion}

We have studied the problem of optimal approximation of a given nonnegative signal  with the scalar autoconvolution  of a nonnegative signal, where the signals are of equal time length. Because of the nonnegativity constraints, the $\ii$-divergence has been adopted as optimality criterion. We have derived an iterative descent algorithm of the alternating minimization type, of which we studied the asymptotic behavior, and proved that its limit points are Kuhn-Tucker points of the original minimization problem.

\section*{Acknowledgement}

We are very grateful to the reviewer whose constructive comments and suggestions were very helpful to improve our contribution.


\newpage                                       
\begin{appendices}                             
\renewcommand{\thesection}{\Alph{section}}    

\section{Solving minimization Problem~\ref{problemw}}\label{app:equation}

The solution to the minimization Problem~\ref{problemw} was given in Lemma~\ref{lemmaw}. The lemma states that in order to construct the solution one has to solve the system of equations \eqref{eq:xstarr}, for the special choice of the $r_j$ as in \eqref{eq:ry}. For convenience we repeat the system of equation below as \eqref{eq:xj},
%
\begin{equation}\label{eq:xj}
x_j\sum_{i=0}^{n-j}x_i=r_j\,, \quad j=0,\ldots,n,
\end{equation}
 In this appendix we detail the solution, denoted $x_j^\star$ for $j=0,\dots n$, of system \eqref{eq:xj}, for any given set of positive $r_j$.
As the analytic expressions for the solution turn out to be different for $n$ even and odd, we treat the two cases separately. First some preliminaries that are generally valid.

\begin{lemma}\label{lemma:R}
Consider the system of equations \eqref{eq:xj}. Assume
\begin{equation}\label{eq:s2}
S^2:=\sum_{j\leq n/2}r_j-\sum_{j>n/2}r_j\geq 0\,,
\end{equation}
and $S=\sqrt{S^2}$. Let  $x_j^\star$ be the solutions. Then it holds that $H_n:=\sum_{j\leq n/2}x^\star_j=S$.
\end{lemma}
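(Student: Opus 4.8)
The plan is to exploit the symmetry between the index $j$ and its "reflection" in the equations \eqref{eq:xj}. Writing $T:=\sum_{i=0}^n x_i^\star$ for the total sum, note that the $j$-th equation reads $x_j^\star(T-\sum_{i=n-j+1}^n x_i^\star)=r_j$, i.e.\ $x_j^\star(\sum_{i=0}^{n-j}x_i^\star)=r_j$. The key observation is that the partial sum $\sum_{i=0}^{n-j}x_i^\star$ appearing as the multiplier of $x_j^\star$ is, for $j\le n/2$, a sum running past the midpoint, while for $j>n/2$ it stops short of the midpoint; splitting the range of summation at $n/2$ and pairing the equation for $j$ with the equation for (roughly) $n-j$ should make a telescoping/cancellation visible.

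Concretely, I would first sum the equations \eqref{eq:xj} over $j\le n/2$ and separately over $j>n/2$. Summing over $j\le n/2$ gives $\sum_{j\le n/2}r_j=\sum_{j\le n/2}x_j^\star\sum_{i=0}^{n-j}x_i^\star$; summing over $j>n/2$ gives $\sum_{j>n/2}r_j=\sum_{j>n/2}x_j^\star\sum_{i=0}^{n-j}x_i^\star$. Subtracting, the right-hand side becomes $\sum_{j\le n/2}x_j^\star\sum_{i=0}^{n-j}x_i^\star-\sum_{j>n/2}x_j^\star\sum_{i=0}^{n-j}x_i^\star$, which is a quadratic form in the $x_i^\star$ with coefficients $\pm 1$ depending on whether a given product $x_a^\star x_b^\star$ is counted with a small index or a large index. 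The claim is that this quadratic form collapses to $\big(\sum_{j\le n/2}x_j^\star\big)^2 = H_n^2$. To see this I would track, for each unordered pair $\{a,b\}$ with $a\le b$, how often $x_a^\star x_b^\star$ occurs: it occurs in the term for $j=a$ iff $b\le n-a$, i.e.\ $a+b\le n$, and in the term for $j=b$ iff $a\le n-b$, again $a+b\le n$, with the sign determined by whether $j\le n/2$ or $j>n/2$. A careful bookkeeping of the cases $a+b\le n$ versus $a+b>n$, and of which of $a,b$ lie on which side of $n/2$, should show that exactly the products with both indices $\le n/2$ survive with coefficient $+1$ (each counted with the right multiplicity to reassemble $H_n^2$), and everything else cancels. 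One will have to treat $n$ even and $n$ odd with slight care — when $n$ is even the index $j=n/2$ sits exactly on the boundary and appears in the "$\le n/2$" sum, and when $n$ is odd there is no exact midpoint — but the final identity $\sum_{j\le n/2}r_j-\sum_{j>n/2}r_j=H_n^2$ should hold uniformly. Combined with $H_n^2=S^2\ge 0$ and $H_n\ge 0$ (the $x_i^\star$ are nonnegative), this gives $H_n=S$.

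The main obstacle is the combinatorial cancellation in the quadratic form: making the pairing argument airtight, i.e.\ verifying that for every pair $\{a,b\}$ the signed multiplicity is $0$ unless $a,b\le n/2$ (in which case it is the correct positive number to rebuild $H_n^2$), requires splitting into the cases $a+b<n$, $a+b=n$, $a+b>n$ and, within each, the sub-cases according to the positions of $a$ and $b$ relative to $n/2$, with the parity of $n$ affecting the boundary. This is exactly the kind of index juggling already carried out in the proof of Lemma~\ref{lemmaw} for the quantity denoted $S^2$ there, so I would model the argument on that computation. Once the identity $H_n^2=S^2$ is established the rest is immediate.
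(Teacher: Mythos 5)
Your proposal is correct and follows essentially the same route as the paper: sum the equations for $j\le n/2$, subtract those for $j>n/2$, and identify the resulting quadratic form with $H_n^2$ (your pairwise sign bookkeeping does check out, since two indices both exceeding $\lfloor n/2\rfloor$ can never satisfy $a+b\le n$). The paper merely executes the cancellation more compactly by splitting the inner summation range of the first double sum at $k=\lfloor n/2\rfloor$, which isolates $\big(\sum_{i=0}^k x_i\big)^2$ and cancels the remainder against the second double sum in one step.
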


\begin{proof}
For convenience we write $k=\lfloor \frac{n}{2} \rfloor$, so $S^2=\sum_{j\leq k}r_j-\sum_{j\geq k+1}r_j$. Add now the first $k+1$ equations of \eqref{eq:xj} and subtract the last $n-k$. The resulting right hand side is then exactly $S^2$ and the left hand side becomes
\[
H_n^2:=\sum_{j=0}^k\sum_{i=0}^{n-j} x_jx_i - \sum_{j=k+1}^n\sum_{i=0}^{n-j} x_jx_i\,.
\]
Consider the first double sum in $H_n^2$, which is equal to
\[
\sum_{i=0}^k\sum_{j=0}^{k} x_jx_i+\sum_{i=k+1}^n\sum_{j=0}^{n-i} x_jx_i=\Big(\sum_{i=0}^kx_i\Big)^2+\sum_{i=k+1}^n\sum_{j=0}^{n-i} x_jx_i\,,
\]
which yields for the solutions $x_j^\star$ the identity $H_n^2=\big(\sum_{i=0}^kx^\star_i\big)^2=S^2$.
\end{proof}
Below we need for $j\in\{0,\ldots,n\}$ the quantities
\begin{align}
B_j&=\sum_{i=0}^jr_i\,,\label{eq:bj} \\
E_j&=\sum_{i=j}^nr_i\,.\label{eq:ej}
\end{align}

\subsection{Case of $n$ even} \label{section:neven}

Assume $n$ is even, $n=2k$. Consider the equation in \eqref{eq:xj} for $j=k$,
\[
x_k\sum_{i=0}^{n-k}x_i=r_k\,.
\]
Lemma~\ref{lemma:R} gives $x_k^\star=r_k/S$, provided $S>0$.  The complete solution to \eqref{eq:xj} is the content of Proposition~\ref{prop:neven}. Before formulating the results of that proposition in full detail, we outline the underlying structure. Starting from the `middle' term $x^\star_k=\frac{r_k}{S}$ one sets up two recursions, for $x^\star_{k+l}$ and for $x^\star_{k-l}$, with $0\leq l \leq k$ that are solved in an alternating way.
The explicit solution is presented in the proposition.
\begin{proposition}\label{prop:neven}
Assume $n=2k$ and strenghten condition \eqref{eq:s2} to $S^2>0$. Then the system of equations \eqref{eq:xj} has the unique solutions $x^\star_j$ given by the following expressions. For $j=k$ one has $x^\star_{k}=\frac{r_{k}}{S}$.
For $m>k$ one has
\begin{equation}\label{eq:m>k}
x^\star_{m} = \frac{r_{m}S}{B_{k}-E_{k+1}}\prod_{i=k+1}^m\frac{B_{n-i+1}-E_{i}}{B_{n-i}-E_{i}}\,,
\end{equation}
and for $m\leq k$
\begin{equation}\label{eq:m<k}
x^\star_{m} = \frac{r_{m}}{S}\prod_{i=1}^{k -m}\frac{B_{k-i}-E_{k+i}}{B_{k-i}-E_{k+1+i}}\,.
\end{equation}
\end{proposition}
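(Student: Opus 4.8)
The plan is to reformulate everything in terms of the partial sums $P_m:=\sum_{i=0}^m x^\star_i$ (with $P_{-1}:=0$), so that the system \eqref{eq:xj} reads $(P_j-P_{j-1})\,P_{n-j}=r_j$ for $j=0,\ldots,n$, where $n=2k$. By Lemma~\ref{lemma:R} every positive solution satisfies $P_k=\sum_{i=0}^k x^\star_i=S$, and $S>0$ because the $r_j$ are positive; moreover $S^2=B_k-E_{k+1}$ by the definition \eqref{eq:s2}. The equation for $j=k$ is $x^\star_k\,P_k=r_k$, which already yields the asserted middle value $x^\star_k=r_k/S$. From here I want to show that, proceeding outward, the pairs $P_{k-l}$ and $P_{k+l}$ for $l=1,\ldots,k$ are forced, and to evaluate them in closed form; the closed forms for $x^\star_{k\pm l}$ then follow from $x^\star_{k+l}=r_{k+l}/P_{k-l}$ and $x^\star_{k-l}=r_{k-l}/P_{k+l}$, which are just the equations for $j=k+l$ and $j=k-l$.

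The crucial step is a cross-product invariant. Put $R_l:=P_{k-l}P_{k+l}$, so $R_0=S^2$. The equation for $j=k+l$ gives $R_l=P_{k+l-1}P_{k-l}+r_{k+l}$, while the equation for $j=k-l+1$ gives $P_{k-l}P_{k+l-1}=R_{l-1}-r_{k-l+1}$ (using $P_{k-l+1}P_{k+l-1}=R_{l-1}$); together these yield the recursion
\[
R_l=R_{l-1}+r_{k+l}-r_{k-l+1},\qquad l=1,\ldots,k.
\]
Telescoping and using $\sum_{m=1}^l r_{k+m}=E_{k+1}-E_{k+l+1}$, $\sum_{m=1}^l r_{k-m+1}=B_k-B_{k-l}$ and $S^2=B_k-E_{k+1}$ gives the clean identities
\[
P_{k-l}P_{k+l}=B_{k-l}-E_{k+l+1},\qquad P_{k-l}P_{k+l-1}=B_{k-l}-E_{k+l}.
\]
Dividing consecutive instances of these produces $P_{k+l}/P_{k+l-1}=(B_{k-l}-E_{k+l+1})/(B_{k-l}-E_{k+l})$ and $P_{k-l}/P_{k-l+1}=(B_{k-l}-E_{k+l})/(B_{k-l+1}-E_{k+l})$; telescoping from $P_k=S$ then gives
\[
P_{k+l}=S\prod_{l'=1}^{l}\frac{B_{k-l'}-E_{k+l'+1}}{B_{k-l'}-E_{k+l'}},\qquad
P_{k-l}=S\prod_{l'=1}^{l}\frac{B_{k-l'}-E_{k+l'}}{B_{k-l'+1}-E_{k+l'}}.
\]
Substituting into $x^\star_{k+l}=r_{k+l}/P_{k-l}$ and $x^\star_{k-l}=r_{k-l}/P_{k+l}$ and reindexing via $i=k+l'$ (so $B_{k-l'+1}=B_{n-i+1}$, $B_{k-l'}=B_{n-i}$, $E_{k+l'}=E_i$), together with $r_m/S=r_mS/(B_k-E_{k+1})$, reproduces exactly \eqref{eq:m>k} and \eqref{eq:m<k}. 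Since every step of this chain of deductions is forced, uniqueness of the positive solution follows at once.

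For existence one runs the same scheme forward: set $x^\star_k=r_k/S$, and for $l=1,\ldots,k$ compute $P_{k-l}=S-\sum_{i=k-l+1}^k x^\star_i$, then $x^\star_{k+l}=r_{k+l}/P_{k-l}$, then $P_{k+l}=S+\sum_{i=k+1}^{k+l}x^\star_i$, then $x^\star_{k-l}=r_{k-l}/P_{k+l}$; the identities above continue to hold for these recursively defined quantities, so by construction the resulting $x^\star$ solves \eqref{eq:xj} as soon as no division by zero occurs. The one thing still to be checked — and, I expect, the main obstacle — is that all the partial sums $P_{k\pm l}$, equivalently all the combinations $B_{k-l}-E_{k+l}$ and $B_{k-l}-E_{k+l+1}$ appearing as denominators, are strictly positive; this is precisely where condition \eqref{eq:s2} (and, when Lemma~\ref{lemmaw} is invoked, the particular structure $r_j=\half\widetilde\by_j$ with $\by\in\bby$) must be exploited. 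Granting this positivity, the algebraic bookkeeping in the two displays above is then a routine induction driven by the single invariant $R_l=P_{k-l}P_{k+l}$ and the peeling relations $B_j-r_j=B_{j-1}$, $E_j-r_j=E_{j+1}$.
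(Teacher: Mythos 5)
Your argument is correct, and the algebra checks out (I verified the recursion $R_l=R_{l-1}+r_{k+l}-r_{k-l+1}$, the telescoped identities $P_{k-l}P_{k+l}=B_{k-l}-E_{k+l+1}$ and $P_{k-l}P_{k+l-1}=B_{k-l}-E_{k+l}$, and the reindexing back to \eqref{eq:m>k}--\eqref{eq:m<k}), but your route is genuinely different from the paper's. The paper also works with the partial sums $S-\sum_{l=0}^p x^\star_{k-l}$ and $S+\sum_{l=1}^p x^\star_{k+l}$ (your $P_{k-p-1}$ and $P_{k+p}$), but it \emph{postulates} the closed forms \eqref{eq:rmin} and \eqref{eq:rplus} and verifies them by induction, then checks that the appropriate products return $r_j$; uniqueness is handled separately by the same forced outward recursion you describe. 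Your cross-product invariant $R_l=P_{k-l}P_{k+l}$ instead \emph{derives} those closed forms by telescoping, which is more constructive and explains where the combinations $B_{k-l}-E_{k+l}$ come from, at the cost of invoking two equations of the system ($j=k+l$ and $j=k-l+1$) per step rather than verifying one identity at a time. The one loose end you flag -- strict positivity of the denominators $B_{k-l}-E_{k+l}$ and $B_{k-l}-E_{k+l+1}$, which condition \eqref{eq:s2} alone does not guarantee (e.g.\ $n=2$, $r=(1,10,5)$ satisfies \eqref{eq:s2} but gives $P_0<0$) -- is real, but it is equally unaddressed in the paper's proof, whose division by $S-x^\star_k$ etc.\ needs the same hypothesis; in the intended application the structure $r_j=\half\widetilde\by_j$ is what saves the day. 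So this is a shared implicit assumption, not a defect you introduced, and you are right to single it out.
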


\begin{proof}
Recall $n=2k$. It follows from Lemma~\ref{lemma:R} that for the solutions one necessarily has $\sum_{j\leq k}x^\star_j=S$. The following procedure shows uniqueness.  Knowing $x_k^\star$, we look at the equation for $x_{k+1}$, rewritten as $x_{k+1}(S-x^\star_k)=r_{k+1}$, which gives $x^\star_{k+1}$. Then we consider $x_{k-1}(S+x^\star_{k+1})=r_{k-1}$, from which we can now find $x^\star_{k-1}$. Iterating this procedure gives all solutions $x_j^\star$.

Having shown the uniqueness, we now verify that the $x_m^\star$ given by Equations~\eqref{eq:m>k} and~\eqref{eq:m<k} satisfy the equations.
First we claim for $p\leq k$
\begin{equation}\label{eq:rmin}
S-\sum_{l=0}^px^\star_{k-l}=\frac{B_{k-1}-E_{k+1}}{S}\prod_{l=1}^p\frac{B_{k-l-1}-E_{k+l+1}}{B_{k-l}-E_{k+l+1}}\,.
\end{equation}
One verifies this equality easily for $p=0$ by application of Lemma~\ref{lemma:R}, and we proceed by induction. Suppose this formula holds true for some $p-1$, where $p\geq 1$. Then we compute $S-\sum_{l=0}^px^\star_{k-l}= S-\sum_{l=0}^{p-1}x^\star_{k-l}-x^\star_{k-p}$, which becomes
\[
\frac{B_{k-1}-E_{k+1}}{S}\prod_{l=1}^{p-1}\frac{B_{k-l-1}-E_{k+l+1}}{B_{k-l}-E_{k+l+1}} \,-\, \frac{r_{k-p}}{S}\prod_{i=1}^p\frac{B_{k-i}-E_{k+i}}{B_{k-i}-E_{k+1+i}}\,,
\]
and this equals
\[
\frac{B_{k-1}-E_{k+1}}{S}\left(\prod_{l=1}^{p-1}\frac{B_{k-l-1}-E_{k+l+1}}{B_{k-l}-E_{k+l+1}} - \frac{r_{k-p}}{B_{k-p}-E_{k+1+p}}\prod_{i=2}^{p}\frac{B_{k-i}-E_{k+i}}{B_{k-i-1}-E_{k+i}}\right)\,,
\]
which in turn is equal to
\[
\frac{B_{k-1}-E_{k+1}}{S}\prod_{l=1}^{p-1}\frac{B_{k-l-1}-E_{k+l+1}}{B_{k-l}-E_{k+l+1}}\left(1 - \frac{r_{k-p}}{B_{k-p}-E_{k+1+p}}\right)\,.
\]
The last expression equals
\[
\frac{B_{k-1}-E_{k+1}}{S}\prod_{l=1}^{p-1}\frac{B_{k-l-1}-E_{k+l+1}}{B_{k-l}-E_{k+l+1}}\times\frac{B_{k-p-1}-E_{k+1+p}}{B_{k-p}-E_{k+1+p}}\,,
\]
since $r_{k-p}=B_{k-p}-B_{k-p-1}$. But this is just \eqref{eq:rmin} and proves the claim.

Another computation gives
\begin{equation}
x^\star_{k+l}  = \frac{r_{k+l}S}{B_{k-1}-E_{k+1}}\prod_{i=1}^{l-1}\frac{B_{k-i}-E_{k+i+1}}{B_{k-i-1}-E_{k+i+1}}\,.\label{eq:xplus}
\end{equation}
We can now compute the product $(S-\sum_{l=0}^{p-1}x^\star_{k-l})x^\star_{k+p}$. From \eqref{eq:rmin} and \eqref{eq:xplus} we see that the product becomes $r_{k+p}$, as desired.

In the next part of the proof we need $S+\sum_{l=1}^px^\star_{k+l}$, $p\geq 0$, which we claim to be equal to
\begin{equation}\label{eq:rplus}
S\prod_{l=1}^p\frac{B_{k-l}-E_{k+l+1}}{B_{k-l}-E_{k+l}}\,.
\end{equation}
As above, this can be proved by induction. It is trivial to check this for $p=1$. Assuming the identity to hold for $p-1$ ($p\geq 1$), we compute
\begin{align*}
S+\sum_{l=1}^px^\star_{k+l} & = S\prod_{l=1}^{p-1}\frac{B_{k-l}-E_{k+l+1}}{B_{k-l}-E_{k+l}}+x^\star_{k+p} \\
& = S\prod_{l=1}^{p-1}\frac{B_{k-l}-E_{k+l+1}}{B_{k-l}-E_{k+l}}+\frac{r_{k+p}}{B_{k-1}-E_{k+1}}\prod_{i=1}^{p-1}\frac{B_{k-i}-E_{k+i+1}}{B_{k-i-1}-E_{k+i+1}} \\
& = S\prod_{l=1}^{p-1}\frac{B_{k-l}-E_{k+l+1}}{B_{k-l}-E_{k+l}}\left(1+\frac{r_{k+p}}{B_{k-1}-E_{k+1}}\frac{B_{k-1}-E_{k+1}}{B_{k-p}-E_{k+p}}\right) \\
& = S\prod_{l=1}^{p-1}\frac{B_{k-l}-E_{k+l+1}}{B_{k-l}-E_{k+l}}\left(1+\frac{r_{k+p}}{B_{k-p}-E_{k+p}}\right) \\
& = S\prod_{l=1}^{p-1}\frac{B_{k-l}-E_{k+l+1}}{B_{k-l}-E_{k+l}}\frac{B_{k-p}-E_{k+p+1}}{B_{k-p}-E_{k+p}}\,,
\end{align*}
which proves the claim. The next fact to verify is the value of the product $x^\star_{k-p}\big(S+\sum_{l=1}^px^\star_{k+l}\big)$. For that we need a result in the opposite direction (compared to \eqref{eq:xplus}). For $l\geq 1$ one has
\begin{equation}\label{eq:xmin}
x^\star_{k-l} = \frac{r_{k-l}}{S}\prod_{i=1}^l\frac{B_{k-i}-E_{k+i}}{B_{k-i}-E_{k+1+i}}\,.
\end{equation}
A combination of \eqref{eq:rplus} and \eqref{eq:xmin} shows that this product equals $r_{k-p}$.
\end{proof}

\subsection{Case of $n$ odd}

Here we consider the case where $n$ is odd,
$n=2k+1$. Recall the result of Lemma~\ref{lemma:R}, $\sum_{j=0}^k x_j^\star=S$, with now $S^2=\sum_{j=0}^kr_j-\sum_{j=k+1}^nr_j=B_k-E_{n-k}$. The equation in \eqref{eq:xstarr} for $j=k+1$ is $x_{k+1}\sum_{i=0}^kx_i=r_{k+1}$. Hence $x_{k+1}^\star=\frac{r_{k+1}}{S}$, under the assumption that $S>0$. The procedure to get the other solutions $x_j^\star$ is as in Section~\ref{section:neven}, again a double system of recursions is set up.
For instance one gets $x_k^\star=\frac{r_k}{S+x_{k+1}^\star}=r_k\frac{S}{S^2+r_{k+1}}=r_k\frac{S}{B_{k}-E_{n-k+1}}$, and $x_{k+2}^\star=\frac{r_{k+2}}{S-x_{k}^\star}=\frac{r_{k+2}}{S}\frac{B_k-E_{n-k+1}}{B_{k-1}-E_{n-k+1}}$. The general case is detailed in the following proposition

\begin{proposition}\label{prop:nodd}
Let $n=2k+1$ and $S^2>0$. Then $x_{k+1}^\star=\frac{r_{k+1}}{S}$. For $m\geq k+1$ one has
\begin{equation}\label{eq:m>kodd}
x_m^\star=\frac{r_m}{S}\prod_{l=1}^{m-k-1}\frac{B_{k-l+1}-E_{k+l+1}}{B_{k-l}-E_{k+l+1}}\,,
\end{equation}
and for $m\leq k$ one has (with $E_{n+1}=0$)
\begin{equation}\label{eq:m<kodd}
x_m^\star=\frac{r_mS}{B_k-E_{k+2}}\prod_{l=1}^{k-m}\frac{B_{k-l}-E_{k+l+1}}{B_{k-l}-E_{k+l+2}}\,.
\end{equation}
\end{proposition}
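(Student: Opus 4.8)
The plan is to mimic the structure of the proof of Proposition~\ref{prop:neven} almost verbatim, adapting the indexing to the odd case $n=2k+1$. The uniqueness part is immediate: Lemma~\ref{lemma:R} gives $\sum_{j=0}^k x_j^\star = S$, hence the equation for $j=k+1$, namely $x_{k+1}(S-\sum_{i=k+2}^n x_i)=r_{k+1}$, in fact reads $x_{k+1}S = r_{k+1}$ once one notices that the ``middle'' index $k+1$ sits just above the truncation point; so $x_{k+1}^\star = r_{k+1}/S$. Then one alternates: the equation for $x_k$ reads $x_k(S+x_{k+1}^\star)=r_k$, giving $x_k^\star$; the equation for $x_{k+2}$ reads $x_{k+2}(S-x_k^\star)=r_{k+2}$, giving $x_{k+2}^\star$; and so on, each step determining one new value from previously computed ones. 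This establishes that a solution, if it exists, is unique and is computed by this recursion.

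For existence, I would verify that the closed-form expressions \eqref{eq:m>kodd} and \eqref{eq:m<kodd} satisfy \eqref{eq:xj}. Following the even case, the key is to establish two auxiliary product formulas by induction on $p$: one for the partial sum $S+\sum_{l=1}^p x^\star_{k-l+1}$ (the ``upper'' running sum, used when solving for an $x^\star_m$ with $m\le k$) and one for $S-\sum_{l=1}^p x^\star_{k+l}$ (the ``lower'' running sum, used when solving for $x^\star_m$ with $m\ge k+1$). Concretely I expect the analogues
\[
S+\sum_{l=1}^{p}x^\star_{k-l+1} \;=\; S\prod_{l=1}^{p}\frac{B_{k-l+1}-E_{k+l+1}}{B_{k-l+1}-E_{k+l}}\,,
\qquad
S-\sum_{l=1}^{p}x^\star_{k+l} \;=\; \frac{B_{k}-E_{k+2}}{S}\prod_{l=1}^{p-1}\frac{B_{k-l}-E_{k+l+2}}{B_{k-l}-E_{k+l+1}}\,,
\]
up to a shift in the indices that I would pin down carefully. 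Each induction step uses the defining recursion $r_j = B_j - B_{j-1} = E_j - E_{j+1}$ to telescope a factor of the form $\bigl(1\pm \tfrac{r_j}{B_{\cdot}-E_{\cdot}}\bigr)$ into a single ratio, exactly as in the displayed computations for \eqref{eq:rplus}. Once both running-sum formulas are in hand, multiplying the relevant one by the corresponding $x^\star_m$ from \eqref{eq:m>kodd} or \eqref{eq:m<kodd} collapses the telescoping products and returns $r_m$, which is precisely equation~\eqref{eq:xj} for that $m$; the boundary equations $j=k+1$ (done above) and $j=0,n$ are checked directly, using $E_{n+1}=0$ for the latter.

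The main obstacle I anticipate is purely bookkeeping: getting the index shifts in the two auxiliary product formulas exactly right so that the denominators $B_{\cdot}-E_{\cdot}$ line up and the telescoping actually occurs, and in particular making sure the asymmetry of the odd case (the ``middle'' is $k+1$, while $\sum_{j\le k}x_j^\star=S$ involves indices up to $k$, so the upper and lower recursions are not symmetric) is handled consistently throughout. A secondary point requiring care is positivity: one must check that all the partial denominators $B_{k-l}-E_{k+l+1}$, $B_{k-l+1}-E_{k+l}$, etc., that appear are strictly positive, so that the expressions are well defined and yield positive $x^\star_j$. This follows from the running-sum identities themselves — each such denominator is $S$ times a partial sum of positive quantities plus/minus terms that combine to something manifestly positive — so I would extract positivity as a byproduct of the induction rather than proving it separately. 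No genuinely new idea beyond the even case is needed; the content is that the same alternating-recursion scheme closes up in the odd case too.
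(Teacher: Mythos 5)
Your overall strategy is exactly the paper's: uniqueness via the alternating recursion seeded by $x_{k+1}^\star=r_{k+1}/S$, and existence by checking the closed forms against two telescoping running-sum identities proved by induction on $p$ using $r_j=B_j-B_{j-1}=E_j-E_{j+1}$. However, the two auxiliary identities you propose track the wrong quantities, and this is more than an index shift. The coefficient of $x_j$ in equation \eqref{eq:xj} is $\sum_{i=0}^{n-j}x_i$; for $j=k-p$ (with $n=2k+1$) this equals
\[
\sum_{i=0}^{k+p+1}x_i \;=\; S+\sum_{l=1}^{p+1}x^\star_{k+l}\,,
\]
i.e.\ $S$ \emph{plus} the terms with indices \emph{above} $k$, while for $j=k+1+p$ it equals
\[
\sum_{i=0}^{k-p}x_i \;=\; S-\sum_{l=1}^{p}x^\star_{k-l+1}\,,
\]
i.e.\ $S$ \emph{minus} the terms with indices at or just below $k$. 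Your two running sums, $S+\sum_{l=1}^{p}x^\star_{k-l+1}$ and $S-\sum_{l=1}^{p}x^\star_{k+l}$, have these two blocks of summands interchanged; neither appears as a coefficient in any of the equations, so the verification as outlined cannot close. (The same mix-up shows in your $j=k+1$ step: its coefficient is simply $\sum_{i=0}^{k}x_i=S$ by Lemma~\ref{lemma:R}, not $S-\sum_{i=k+2}^{n}x_i$.) Concretely, your second postulated identity already fails at $p=1$: $S-x_{k+1}^\star=(B_k-E_{k+1}-r_{k+1})/S$, whereas $(B_k-E_{k+2})/S$ is the value of $S+x_{k+1}^\star$, since $E_{k+1}-r_{k+1}=E_{k+2}$.

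The repair is mechanical and is what the paper does: prove by induction that for $p\leq k$
\[
S-\sum_{l=0}^{p}x^\star_{k-l}=S\prod_{l=1}^{p+1}\frac{B_{k-l}-E_{k+l+1}}{B_{k-l+1}-E_{k+l+1}}\,,
\qquad
S+\sum_{l=1}^{p}x^\star_{k+l}=\frac{B_k-E_{k+2}}{S}\prod_{l=1}^{p-1}\frac{B_{k-l}-E_{k+l+2}}{B_{k-l}-E_{k+l+1}}\,,
\]
and then multiply the first by the appropriate $x^\star_m$ from \eqref{eq:m>kodd} and the second by the one from \eqref{eq:m<kodd} to recover $r_m$. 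So the idea and the telescoping mechanism are right, but the two lemmas you would actually need are not the ones you wrote down; they are dictated by the equations themselves rather than being a matter of adjustable bookkeeping.
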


\begin{proof}
Let the $x_m^\star$ be given by Equations~\eqref{eq:m>kodd} and~\eqref{eq:m<kodd}.
Instead of Equation~\eqref{eq:rmin} we now have for $p\leq k$
\[
S-\sum_{l=0}^px^\star_{k-l}=S\prod_{l=1}^{p+1}\frac{B_{k-l}-E_{k+l+1}}{B_{k-l+1}-E_{k+l+1}}\,.
\]
For $p=0$ this is verified by application of Lemma~\ref{lemma:R}. For other values of $p$ one proves this by induction.

Instead of Equation~\eqref{eq:rplus} we now have for $p\leq k+1$
\[
S+\sum_{l=1}^px^\star_{k+l}=\frac{B_k-E_{n-k+1}}{S}\prod_{l=1}^{p-1}\frac{B_{k-l}-E_{k+l+2}}{B_{k-l}-E_{k+l+1}}\,,
\]
which can again be proved by induction with respect to $p$.

One then verifies that the postulated expressions for the $x_m^\star$ satisfy the equations by arguments similar to those in the proof of Proposition~\ref{prop:neven}.
\end{proof}

\end{appendices}   


\begin{thebibliography}{10}

\bibitem{choilanterman2005}
K.~Choi, A.D. Lanterman,
\newblock An iterative deautoconvolution algorithm for nonnegative functions,
\newblock Inverse Probl. 21 (3) (2005) 981--995.


\bibitem{choilantermanraich2006}
K.~Choi, A.D. Lanterman, R.~Raich,
\newblock Convergence of the {S}chulz-{S}nyder phase retrieval algorithm to
  local minima,
\newblock J. Opt. Soc. Am. A 23 (8) (2006) 1835--1845.


\bibitem{c1975}
I.~Csisz{\'a}r,
\newblock $\ii$-divergence geometry of probability distributions and minimization
  problems,
\newblock Ann. Probab. 3 (1975) 146--158.


\bibitem{cs1991}
I.~Csisz{\'a}r,
\newblock Why least squares and maximum entropy? {A}n axiomatic approach to
  inference for linear inverse problems,
\newblock Ann. Stat. 19 (4) (1991) 2032--2066.


\bibitem{csiszarshields}
I.~Csisz\'ar, P.C. Shields,
\newblock Information theory and statistics: a tutorial,
\newblock Found. Trends Commun. Inf. Theory 1 (4) (2004) 417--528.


\bibitem{ct1984}
I.~Csisz{\'a}r, G.~Tusn{\'a}dy,
\newblock Information geometry and alternating minimization procedures,
\newblock Stat. Decis. 1 (suppl. 1) (1984) 205--237.


\bibitem{dose1981inversion}
V.~Dose, Th. Fauster,  H-J. Gossmann,
\newblock The inversion of autoconvolution integrals,
\newblock J. Comput. Phys. 41 (1) (1981) 34--50.


\bibitem{douglas2014autoconvolution}
S.C. Douglas, D.P. Mandic,
\newblock Autoconvolution and panorama: Augmenting second-order signal
  analysis,
\newblock in: Proceedings of the 2014 IEEE-ICASSP Conference, IEEE 2014, pp. 384--388.

\bibitem{fs2006}
L.~Finesso, P.~Spreij,
\newblock Nonnegative matrix factorization and $\ii$-divergence alternating
  minimization,
\newblock Linear Algebra Appl. 416 (2-3) (2006) 270--287.

\bibitem{finessospreij2015ieeeit}
L.~Finesso, P.~Spreij,
\newblock Approximation of nonnegative systems by finite impulse response
  convolutions,
\newblock IEEE Trans. Inf. Theory 61 (8) (2015) 4399--4409.

\bibitem{finessospreij2019automatica}
L.~Finesso, P.~Spreij,
\newblock Approximation of nonnegative systems by moving averages of fixed
  order,
\newblock Automatica 107 (2019) 1--8.

\bibitem{fs2021inverse}
L.~Finesso, P.~Spreij,
\newblock The inverse problem of positive autoconvolution,
\newblock IEEE Trans. Inf. Theory 69 (6) (2023) 4081--4092.

\bibitem{hofmann2006determination}
B.~Hofmann, L.~von Wolfersdorf,
\newblock On the determination of a density function by its autoconvolution
  coefficient,
\newblock Numer. Funct. Anal. Optim. 27 (3-4) (2006) 357--375.

\bibitem{martinez1979global}
V.~Martinez,
\newblock Global methods in the inversion of a self-convolution,
\newblock J. Electron. Spectrosc. Relat. Phenom. 17 (1) (1979) 33--43.

\bibitem{snyderetal1992}
D.L. Snyder, T.J. Schulz,  J.A. O'Sullivan,
\newblock Deblurring subject to nonnegativity constraints,
\newblock IEEE Trans. Signal Process. 40 (5) (1992) 1143--1150.

\end{thebibliography}
\bibliographystyle{plain}

\begin{figure}
\begin{center}
	\includegraphics[viewport=200 0 400 460, scale=0.80]{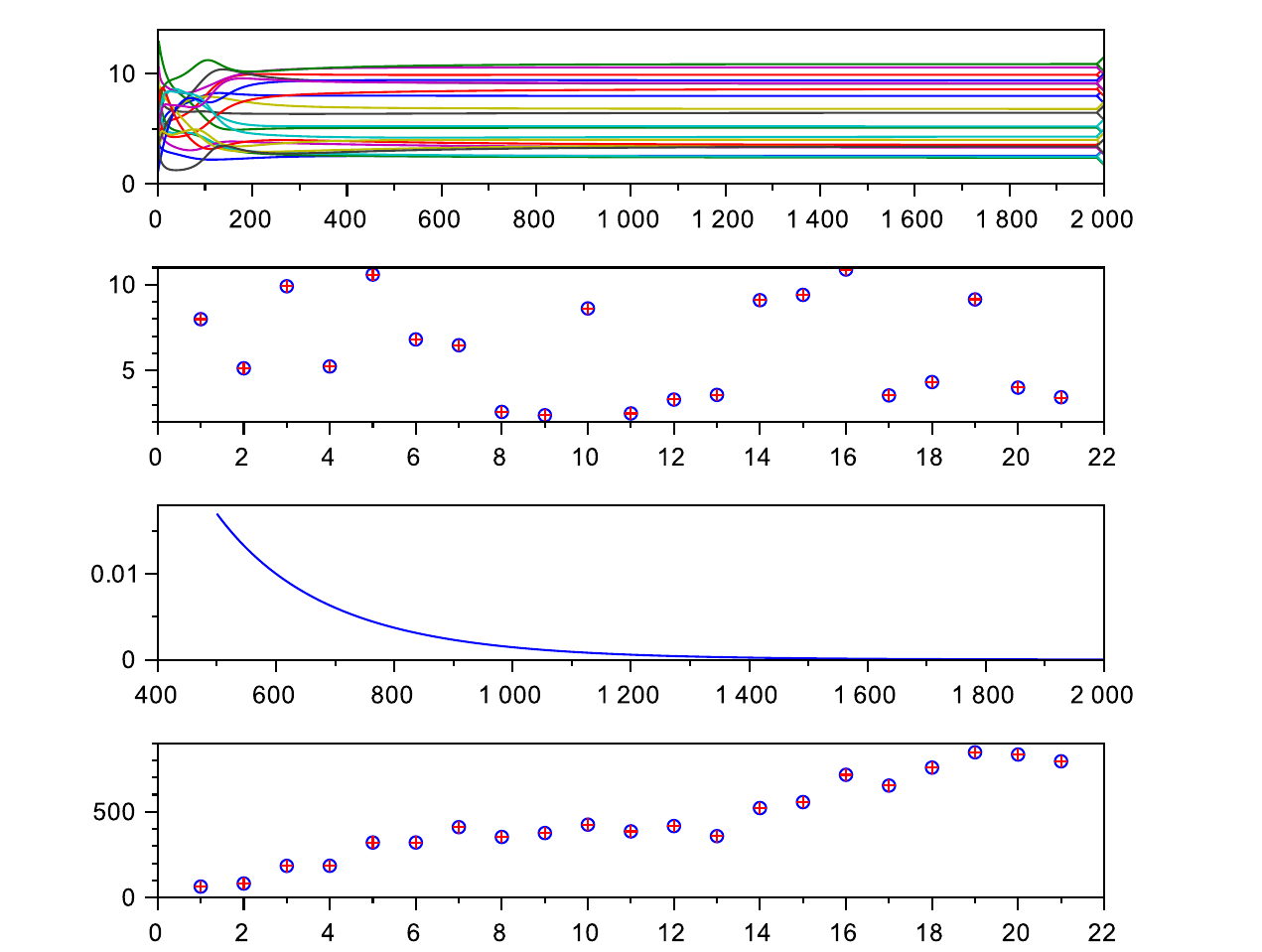}
	\vspace{5ex}
		\caption{True model, $n=20$ and $T=2000$. Top panel: $n+1$ components $x^t_i$ against iteration $t$; the diamonds at $T=2000$ are the true values $x_i$ to which the $x^t_i$ converge. Second panel: $x^T_i$ (plusses) and true values $x_i$ (circles) against $i$. Third panel: $\ii(y||x^t* x^t)$ against $t$. Fourth panel: $y_i$ (circles) and $(x^T* x^T)_i$ (plusses) against $i$.} \label{fig:exact_1}
\end{center}
\end{figure}

\vfill

\begin{figure}
\begin{center}
	\includegraphics[viewport=200 0 400 460, scale=0.80]{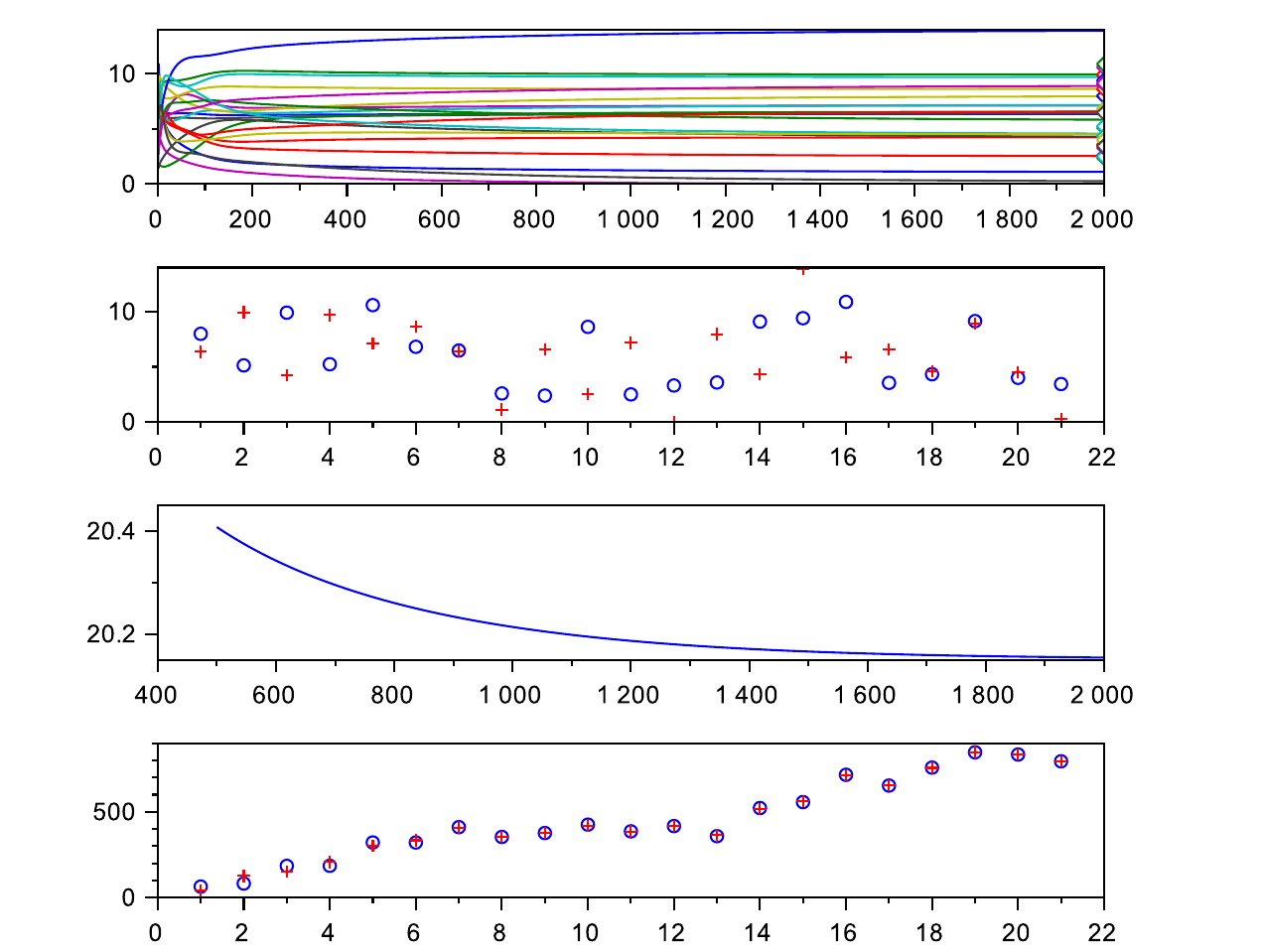}
	\vspace{5ex}
    \caption{The same data $y$ as in Fig.~\ref{fig:exact_1}, with different initial conditions $x^0$. The iterates $x^t$ do not converge to the true $x$. }
		\label{fig:exact_2}
\end{center}
\end{figure}

\begin{figure}
\begin{center}
	\includegraphics[viewport=200 0 400 460, scale=0.80]{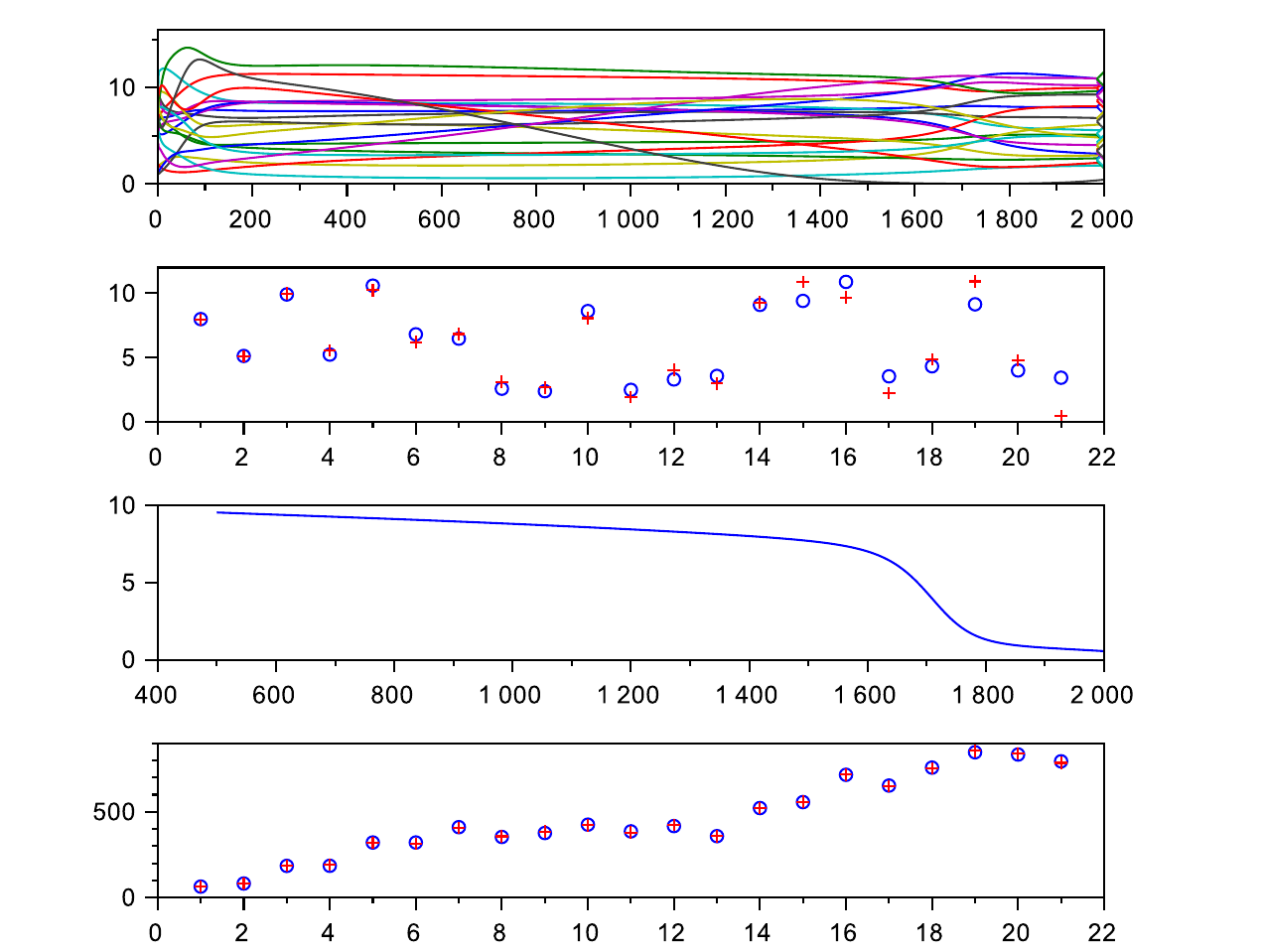}
	\vspace{5ex}
		\caption{The same data $y$ as in Fig.~\ref{fig:exact_1}, with different initial conditions $x^0$. The iterates $x^t$ switch basin of attraction.}
		\label{fig:exact_3}
\end{center}
\end{figure}

\begin{figure}
\begin{center}
	\includegraphics[viewport=200 0 400 460, scale=0.80]{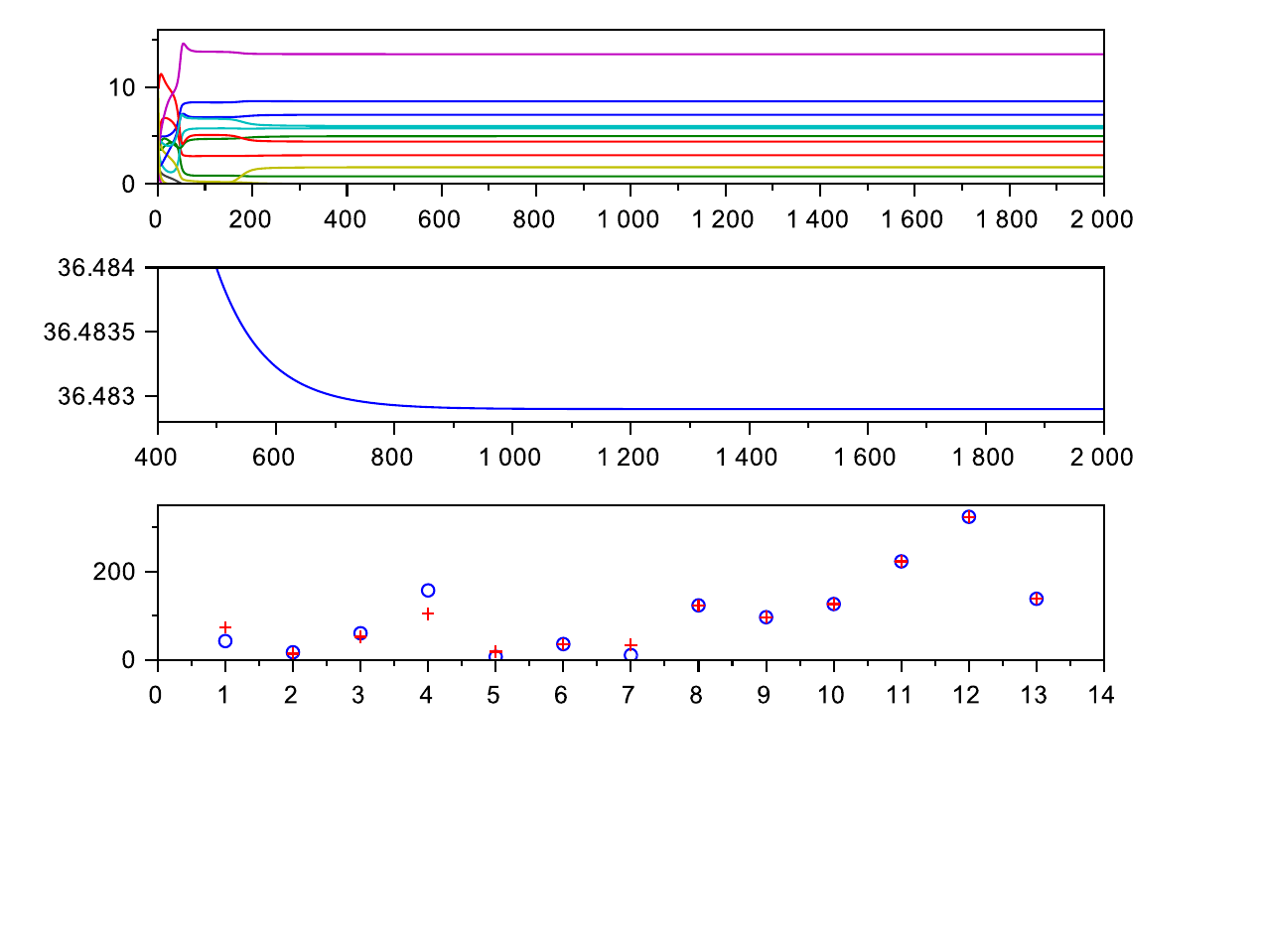}
	\vspace{-15ex}
		\caption{Randomly generated $y$, with $n=12$ and $T=2000$. Top panel: components $x^t_i$ against iteration index $t$. Second panel:  $\ii(y||x^t* x^t)$ against $t$. Third panel: $y_i$ (circles) and final autoconvolutions $(x^T* x^T)_i$ (plusses) against $i$. Third panel: The values of the $y_i$ (circles) and the final autoconvolutions $(x^T* x^T)_i$ (plusses).}
		\label{fig:random_1}
\end{center}
\end{figure}

\begin{figure}
\begin{center}
	\includegraphics[viewport=200 0 400 460, scale=0.80]{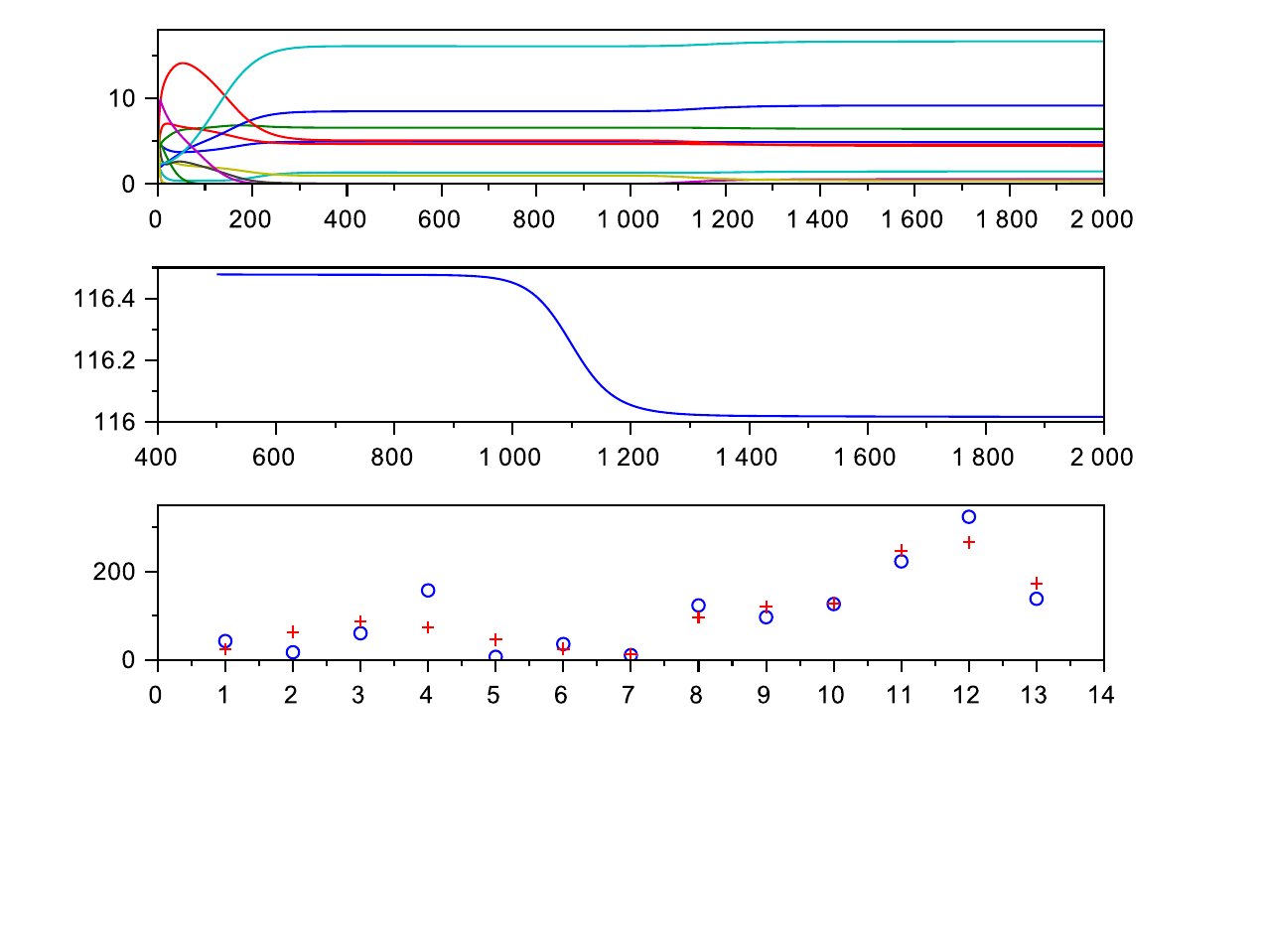}
	\vspace{-15ex}
		\caption{The same data $y$ as in Fig.~\ref{fig:random_1}, with different initial conditions $x^0$.}
		\label{fig:random_2}
\end{center}
\end{figure}

\end{document}